\newtheorem{theorem}{Theorem}
\newtheorem{lemma}[theorem]{Lemma}
\newtheorem{proposition}[theorem]{Proposition}
\theoremstyle{definition}
\newtheorem{definition}[theorem]{Definition}
\theoremstyle{remark}
\newtheorem*{remark*}{Remark}
\newcommand{\D}{{\rm d}}
\newcommand{\FBN}{\mathbb N}
\newcommand{\FBR}{\mathbb R}
\newcommand{\FBC}{\mathbb C}
\newcommand{\FBcD}{{\mathcal D}}
\newcommand{\FBcE}{{\mathcal E}}
\newcommand{\FBcF}{{\mathcal F}}
\newcommand{\FBcO}{{\mathcal O}}
\newcommand{\FBcS}{{\mathcal S}}
\newcommand{\email}[1]{\protect\href{mailto:#1}{#1}}
\begin{document}
\selectlanguage{english}

\title{Continuity properties of the shearlet  transform and the shearlet synthesis
operator on the Lizorkin type spaces}
% of test functions and distributions}

%\titlerunning{The Shearlet transform and Lizorkin spaces}
% Use \titlerunning{<name>} for an abbreviated version of
% your contribution title if the original one is too long
\author{Francesca Bartolucci \thanks{Department of Mathematics, ETH Zurich, Raemistrasse 101, 8092 Zurich, Switzerland (\email{francesca.bartolucci@sam.math.ethz.ch}).}
\and  Stevan Pilipovi\'c \thanks{Department of Mathematics and Informatics, Faculty of Sciences, University of Novi Sad, Trg Dositeja Obradovi\' ca 4, 21000 Novi Sad, Serbia (\email{stevan.pilipovic@gmail.com}, \email{nenad.teofanov@dmi.uns.ac.rs}).}
\and Nenad Teofanov \footnotemark[2]}
%\author{Francesca Bartolucci, Stevan Pilipovi\'c and Nenad Teofanov}
%% Use \authorrunning{Short Title} for an abbreviated version of
%% your contribution title if the original one is too long
%\institute{Francesca Bartolucci \at ETH Zurich, Department of Mathematics, Raemistrasse 101, 8092 Zurich, Switzerland, \email{francesca.bartolucci@sam.math.ethz.ch}
%\and Stevan Pilipovi\'c \at University of Novi Sad, Faculty of Sciences, Department of Mathematics and Informatics, Trg Dositeja
%Obradovi\' ca 4, 21000 Novi Sad, Serbia \email{stevan.pilipovic@gmail.com}
%\and Nenad Teofanov \at University of Novi Sad, Faculty of Sciences, Department of Mathematics and Informatics, Trg Dositeja
%Obradovi\' ca 4, 21000 Novi Sad, Serbia \email{nenad.teofanov@dmi.uns.ac.rs}}
%
% Use the package "url.sty" to avoid
% problems with special characters
% used in your e-mail or web address
%
\maketitle

\abstract{We develop a distributional framework for the shearlet transform $\FBcS_{\psi}\colon\FBcS_0(\FBR^2)\to\FBcS(\mathbb{S})$ and the shearlet synthesis operator
$\FBcS^t_{\psi}\colon\FBcS(\mathbb{S})\to\FBcS_0(\FBR^2)$, where $\FBcS_0(\FBR^2)$ is the Lizorkin test function space and $\FBcS(\mathbb{S})$ is the space of highly localized test functions on
the standard shearlet group $\mathbb{S}$. These spaces and their duals $\mathcal{S}_0^\prime (\mathbb R^2),\, \mathcal{S}^\prime (\mathbb{S})$ are called Lizorkin type spaces of test functions and distributions. We analyze the continuity properties of these transforms
when the admissible vector $\psi$ belongs to $\FBcS_0(\FBR^2)$. Then, we define
the shearlet transform and the shearlet synthesis operator of Lizorkin type distributions as
transpose mappings of the shearlet synthesis operator and the shearlet transform, respectively.
They yield continuous mappings from  $\mathcal{S}_0^\prime (\mathbb R^2)$ to $\mathcal{S}^\prime (\mathbb{S})$ and from
$\mathcal{S}^\prime (\mathbb S)$ to $\mathcal{S}_0^\prime (\mathbb{R}^2)$. Furthermore, we show
the consistency of our definition with the shearlet transform  defined by direct evaluation of a distribution on the shearlets. The same can be done for the shearlet synthesis operator.
Finally, we give a reconstruction formula for Lizorkin type distributions, from which follows that
the action of such generalized functions can be written as an absolutely convergent integral over the standard shearlet group.
}\\\\

\noindent\textit{Key words.} shearlet  and shearlet synthesis transforms; distributions of slow growth; Lizorkin type spaces of test functions and their duals;

\noindent\textit{2010 Mathematics Subject Classification.} 46F12,  	42C40
\vspace{2mm}

\section{Introduction}
\label{FBsec:introduction}

In this paper we provide a sound mathematical background for the shearlet transform and the corresponding shearlet synthesis operator when acting on  test function and distribution spaces.
This is done by studying the continuity properties of these two integral transforms on the Lizorkin type spaces of test functions and, by duality, on the corresponding distribution spaces.
Our results are consistent with the ones given in \cite{dahlikeetal, FBgr12, FBkula09} for $L^2$-functions. Actually, concerning the continuity results, we extend the corresponding ones of quoted papers.

The theory of shearlets emerged from the research activities aimed to create a new generation of analysis and processing
tools for massive and higher dimensional data which could go beyond traditional Fourier and wavelet systems, \cite{FBcado99,FBkula12}.
In signal analysis it is customary to transform a signal, modeled as an element of
a Hilbert space $\mathcal{H}$, into a new function or distribution over a convenient parameter space
in order to extract the most relevant information in the most efficient way, \cite{FBddgl15}.
For example, the wavefront set carries both the information on the location and the geometry of the singularity set of a signal and it can be resolved by means of the shearlet transform,
\cite{FBgr11, FBgr12}.

\par

In a certain sense, shearlets behave for high-dimensional signals as wavelets do for one-dimensional ones.
We refer to   \cite{FBbardemadeviodo}
for the link between these two classical transforms in the framework of square-integrable group representations theory, which offers a unified approach to treat different relevant
representations in signal analysis when the parameter space forms a group. In such a case, signals are mapped via the so-called voice transform in a new function on the group, which is square-integrable with respect to a Haar measure under some technical assumptions.

\par

An important issue in harmonic analysis is the extension of an integral transform from
a Hilbert space to a more general framework of generalized function spaces, and there are two
basic approaches to deal with this problem.
The first approach is to  define the generalized transform through the action of a distribution on a family of test functions in a suitable test function space.
In this respect, the coorbit space theory introduced by Feichtinger and Gr\"ochenig in \cite{feichtingergrochenig89,feichtgroche89}
is a powerful tool which applies when the integral transform is the voice transform associated to a square-integrable representation of a locally compact group.
This is the case of the shearlet transform, and we refer to \cite{dahlikeetal} for its extension based
on the coorbit space theory, which yields completely new regularity spaces, the shearlet coorbit spaces, and the corresponding dual spaces of distributions.
The second approach, which we employ in the current paper, consists in defining a formal transpose of the integral transform,
and then the extended transform between pairs of dual spaces by transposition.
The equivalence between these two approaches can be showed by comparing them on the intersection of their domains.

\par

We follow the second approach based on the duality theory related to locally convex spaces, i.e. the distribution theory introduced by Schwartz in \cite{FBLS1951}. Moreover, we  give the necessary comparison with the direct approach.
Our work is motivated by the lack of a complete and rigorous distributional framework for the shearlet transform in the literature.
On the contrary, related integral transforms, such as the Radon and the wavelet transforms, are well understood and deeply studied in the context of test function and distribution spaces
in e.g. \cite{FBhelgason99,FBhertle83,FBhol1995, FBkpsv2014, FBPRTV}.

\par

It turns out that the Lizorkin type spaces of test functions $\mathcal{S}_0 (\mathbb{R}^{2})$ and $\mathcal{S}(\mathbb S)$ are the natural domain and range space for the shearlet transform. The same holds for the shearlet synthesis operator but in the opposite direction.
Recall, the Lizorkin space $\mathcal{S}_0 (\mathbb{R}^{d})$ consists of smooth,  rapidly decreasing functions with vanishing moments of any order. 
%The extensions and the analysis within  distribution spaces $\mathcal{S}^\prime (\mathbb S)$ and $\mathcal{S}^\prime_0 (\mathbb{R}^{2})$ are done through the transposed mappings.
Kostadinova et al. \cite{FBkpsv2014} showed that the domain of the ridgelet transform can be enlarged to its dual space $\FBcS'_0(\FBR^2)$, known as the space of Lizorkin distributions.
The proof exploits the close connection between the Radon, the ridgelet and the wavelet transforms.
This result, in combination with \cite{FBbardemadeviodo}, yields a relation formula between the shearlet transform and the ridgelet transform,
as it is shown in \cite{FBbarpilnen}.

\par

The vanishing moments condition for the functions in the test space  is not surprising. Indeed, as pointed out in the wavelet  \cite{FBmallat09} and  the shearlet analysis \cite{FBgr11}, as well as  in the recent
study of the Taylorlet transform \cite{FBfinkkahler19}, vanishing moments are crucial in order to measure the local regularity and to detect anisotropic structures of a signal.
Furthermore, we notice that the rectified linear units (ReLUs), which are important examples of
unbounded activation functions in the context of deep learning neural networks, belong to the space of Lizorkin distributions, see
\cite{FBsomu17} for details.

\par

Our main results are continuity theorems for the shearlet transform and the  shearlet synthesis operator on appropriate test function spaces.
Since the image of a signal under the shearlet transform is a function over the shearlet group $\mathbb{S} = \FBR^2\times\FBR\times\FBR^{\times}$,
and our aim is to analyze its regularity and decay properties with respect to all parameters, the corresponding topology is given by a family of norms
determined by eight indices. We prove that the shearlet transform $\FBcS_\psi\colon \mathcal{S}_0(\FBR^2)\to \mathcal{S} (\mathbb{S}) $ and the corresponding synthesis operator $\FBcS_\psi^t\colon \mathcal{S}(\mathbb{S})\to \mathcal{S} (\FBR^2)$ are continuous mappings, where $\mathcal{S}(\mathbb{S})$
is  endowed with the above mentioned topology.
Then, we use these continuity results to extend the shearlet transform and the shearlet synthesis operator to the space of Lizorkin type distributions $\mathcal{S}^\prime_0 (\mathbb{R}^{2})$ and $\mathcal{S}^\prime (\mathbb S)$ by the duality approach and following several ideas in \cite{FBkpsv2014}.
We show the continuity properties of the shearlet transform when acting on $\mathcal{S}^\prime_0 (\mathbb{R}^{2})$.  Observe that many important Schwartz distribution spaces, such as $\FBcE ' (\mathbb{R}^{d})$,
$\FBcO _C' (\mathbb{R}^{d})$, $ L^p  (\mathbb{R}^{d})$ and $\FBcD _{L^1} ' (\mathbb{R}^{d})$ are embedded into
the space of Lizorkin distibutions  $\mathcal{S}_0 ' (\mathbb{R}^{d})$, see e.g. \cite{FBkpsv2014} for details.

\par

We complete our analysis by showing that our definition of the shearlet transform of distributions extends the ones considered so far in e.g. \cite{FBkula12, FBgr11},
and that it is consistent with those for test functions. Moreover, it follows from our analysis that our duality approach is equivalent to the one based on the coorbit space theory presented in \cite{dahlikeetal}.

\par

\subsection{Notation}
\label{FBsubsec:1.1}
We briefly introduce the $d$-dimensional notation. We set $\mathbb{N} = \{ 0,1,2, \dots \}$, $ \mathbb{Z}_+$ denotes the set of positive integers, $\FBR_+=(0,+\infty)$ and $\FBR^{\times}=\FBR\setminus\{0\}$.
When $ x, y \in \mathbb{R}^d $, $ xy = x_1 y_1 + x_2 y_2 + \dots + x_d y_d, $
$|x|$ denotes the Euclidean norm, $ \langle x \rangle = (1+|x|^2)^{1/2}$,
$ x^{m} = x_1^{m_1} \dots x_d^{m_d}$, and $\partial^{m}=\partial_x^{m} =
\partial_{x_1}^{m_1} \dots\partial_{x_d}^{m_d}$,  $ m \in \mathbb{N}^d $.
We write also $\varphi^{(m)}=\partial^{m}\varphi$, $m\in\FBN^d$.
By a slight abuse of notation, the length of a multi-index
$ m \in \mathbb{N}^d $ is denoted by $ |m| = m_1 + \dots + m_d $ and
the meaning of $|\cdot| $ shall be clear from the context.
For every $b\in\FBR^d$, the translation operator acts on a
function $f:\FBR^d\to \FBC$ as
$
T_bf(x)=f(x-b)
$
and the dilation operator $D_a\colon L^p(\FBR^d)\to L^p(\FBR^d)$ is defined by $D_a f(x)=|a|^{-\frac{1}{2}}f(x/a)$ for every $a\in\FBR^{\times}$.
We write $ A\lesssim B $ when $ A \leq C \cdot B $ for
some positive constant $C$.

\par

As usual, $L^p(\FBR^d)$, $p\in[1,+\infty)$, is the Banach space of  $p$-integrable functions
$f\colon\FBR^d\rightarrow\FBC$ with respect to the Lebesgue measure $\D x$ and, if $p=2$, the corresponding scalar product and norm are
$\langle\cdot,\cdot\rangle$ and $\|\cdot\|$, respectively. The space $ L^\infty(\FBR^d)$ consists of essentially bounded functions.
The Fourier transform $\mathcal F $ is given by
\begin{equation*}
\mathcal F f({\xi})= \int_{\FBR^d} f(x) e^{-2\pi i\,
  {\xi}x } \D{x},\qquad f\in L^1(\FBR^d),
\end{equation*}
and it extends to $L^2(\FBR^d)$ in the usual way.

The dual pairing between a test function space $ {\mathcal A}$ and its dual space of distributions
${\mathcal A'}$ is denoted by ${_{\mathcal A'}(\: \cdot\:, \:\cdot \:)_{\mathcal A} }$
and we provide all distribution spaces with the strong dual topologies. For simplicity in the notation, we also use
dual pairings without explicitly stating the spaces ${\mathcal A}, {\mathcal A'}$ and they will be clear from the context.
\par
The Schwartz space of rapidly decreasing smooth test functions is denoted by $\mathcal{S}(\mathbb{R}^{d})$
and $\mathcal{S}'(\mathbb{R}^{d})$ denotes its dual space of tempered distributions. Concerning the family of norms
which defines a projective limit topology  on $ \mathcal{S} (\FBR^d)$, we make the choice
\begin{equation*}
\rho_\nu (\varphi ) = \sup_{x \in \FBR^d, |m|\leq \nu} \langle x \rangle ^\nu |\partial^{m} \varphi(x) |,
\end{equation*}
for every $ \nu \in \FBN$ and $\varphi \in  \mathcal{S} (\FBR^d)$.

\par

If $G$ is a locally compact group, we denote by $L^2(G)$ the Hilbert
space of square-integrable functions with respect to a left Haar
measure on $G$. If $A\in M_{d}(\FBR)$, the vector space of square $d\times d$ matrices with real entries, $^t\! A$ denotes its transpose and we denote the (real) general linear group of size $d\times d$ by ${\rm GL}(d,\FBR)$.

\section{Preliminaries}
In this section we first introduce the shearlet transform. We refer to \cite{FBfolland16}
as a classical reference for the theory of group representations of locally compact groups and to \cite{FBkula12} for a complete overview of shearlet analysis.
Then, we introduce the Lizorkin space of test functions, a closed subspace of $\mathcal{S}(\mathbb{R}^{d})$ which plays a crucial role in our analysis, and the space $ \mathcal{S} (\mathbb{S}) $, which contains the range of the shearlet transform when acting on the Lizorkin space.
As already mentioned, both spaces and their duals are called Lizorkin type spaces of test functions and distributions.

\subsection{The Shearlet transform}
\label{FBsec:3}
Although we will give the definitions of multi-dimensional Lizorkin test and distribution type spaces, when dealing with the shearlet transform and the shearlet synthesis operator we consider $\mathbb R^2$ and the standard shearlet group  $\mathbb{S}=\FBR^2\times\FBR\times\FBR^{\times}$ endowed with the group operation
\begin{equation*}
(b,s,a)(b',s',a')=(b+N_sA_ab',s+|a|^{1/2}s',aa'),
\end{equation*}
where
%the matrices $N_s$ and $A_a$ are defined as
\begin{equation*}
N_s=\left[\begin{matrix}1 & -s\\ 0 & 1\end{matrix}\right],\qquad A_a=a\left[\begin{matrix}1 & 0\\ 0 & |a|^{-1/2}\end{matrix}\right], \;\;\;
s\in\FBR, a\in\FBR^\times.
\end{equation*}
A left Haar measure on $\mathbb{S}$ is given by
\begin{equation*}
{\rm d}\mu(b,s,a)=|a|^{-3}{\rm d}b{\rm d}s{\rm d}a,
\end{equation*}
where ${\rm d}b$, ${\rm d}s$ and ${\rm d}a$ are the Lebesgue measures on $\FBR^2$, $\FBR$ and $\FBR^{\times}$, respectively.
The group $\mathbb{S}$ acts on $L^2(\FBR^2)$ via the square-integrable representation
\begin{equation}\label{eq:shearletrappr}
\pi_{b,s,a}f(x)=|a|^{-3/4}f(A_a^{-1}N_s^{-1}(x-b)),
\end{equation}
or, equivalently, in the frequency domain
\begin{equation} \label{FBshearfreq}
\FBcF \pi_{b,s,a}f(\xi)=|a|^{3/4} e^{-2\pi i b\xi}\FBcF f(A_a{^t\!N_s}\xi).
\end{equation}
We denote by $C(\mathbb{S})$ the space of continuous functions on $\mathbb{S}$ and by $L^{\infty}(\mathbb{S})$ the space of essentially bounded functions on $\mathbb{S}$ with respect to the Haar measure ${\rm d} \mu$.
\begin{definition} \label{FBdefsheartransf}
Let $\psi\in L^2(\FBR^2)$. The shearlet transform associated to $\psi$ is the map $\FBcS_{\psi}\colon L^2(\FBR^2)\to C(\mathbb{S})\cap L^{\infty}(\mathbb{S})$ defined by
\begin{equation*}
\FBcS_{\psi}f(b,s,a)=\langle f,\pi_{b,s,a}\psi\rangle
= |a|^{-\frac{3}{4}}\int_{\FBR^2}f(x)\overline{\psi(A_{a}^{-1} N_{s}^{-1}(x-b))}\D x,
\end{equation*}
for every $(b, s,a) \in \mathbb{S}$.
\end{definition}

It is well-known that the shearlet transform $\FBcS_{\psi}$
is a non-trivial multiple of an isometry from $L^2(\FBR^2)$ into $L^2(\mathbb{S},{\rm d}\mu)$ provided that $\psi\in L^2(\FBR^2)$ satisfies the admissibility condition
\begin{equation} \label{FBeqn:admvect}
0<C_{\psi}=\int_{\FBR^2}\frac{|\FBcF\psi(\xi)|^2}{|\xi_1|^2}\D\xi<+\infty,
\end{equation}
where $\xi=(\xi_1,\xi_2)\in\FBR^2$, or equivalently
\begin{equation}\label{FBeqn:admvectequivalent}
C_{\psi}  = \int_{\FBR^\times}\int_{\FBR}|\FBcF\psi(A_a{^{t}\!N_s}\xi)|^2\D s\frac{\D a}{|a|^{\frac{3}{2}}},\qquad \text{for a.e.} \; \xi\in\FBR^2 \setminus \{0\},
\end{equation}
see e.g. \cite{FBdahlke2008}. Functions  $\psi\in L^2(\FBR^2)$ which satisfy \eqref{FBeqn:admvect} are called admissible shearlets.
Furthermore, if $\psi\in L^2(\FBR^2)$ is an admissible shearlet then for every $f\in L^2(\FBR^2)$ we have the reconstruction formula
\begin{equation}\label{FBreconstructionformulashearlet}
 f = \frac{1}{C_{\psi}}  \int_{\FBR^{\times}}\int_{\FBR}\int_{\FBR^2}
 \FBcS_{\psi}f(b,s,a) \,     \pi_{b,s,a}\psi\ \frac{\D b\D s\D a}{|a|^3},
\end{equation}
where the equality holds in the weak-sense, i.e.
\[
\langle f,g\rangle = \frac{1}{C_{\psi}}  \int_{\FBR^{\times}}\int_{\FBR}\int_{\FBR^2}
\FBcS_{\psi}f(b,s,a) \,     \langle\pi_{b,s,a}\psi,g\rangle\ \frac{\D b\D s\D a}{|a|^3},\qquad g\in L^2(\FBR^2).
\]
 We can also consider a reconstruction formula when
the analyzing and the synthesis vectors do not coincide. Precisely, for every pair of functions $\psi,\phi\in L^2(\FBR^2)$ which
satisfy the admissible condition
\begin{equation} \label{FBeqn:admvect2}
0<C_{\psi,\phi}=\int_{\FBR^2}\frac{\overline{\FBcF\psi(\xi)}\FBcF\phi(\xi)}{|\xi_1|^2}\D\xi<+\infty
\end{equation}
(or equivalently
\begin{equation}\label{FBeqn:admvectequivalent2}
C_{\psi,\phi} = \int_{\FBR^{\times}}\int_{\FBR}\overline{\FBcF \psi(A_a{^t\!N_s}\xi)}\FBcF \phi(A_a{^t\!N_s}\xi)\ \frac{\D s\D a}{|a|^\frac{3}{2}},\qquad \text{for a.e.} \; \xi\in\FBR^2 \setminus \{0\}),
\end{equation}
we have the reconstruction formula
\begin{equation}\label{FBreconstructionformulashearlet2}
f = \frac{1}{C_{\psi,\phi}}  \int_{\FBR^{\times}}\int_{\FBR}\int_{\FBR^2}
\FBcS_{\psi}f(b,s,a) \,     \pi_{b,s,a}\phi\ \frac{\D b\D s\D a}{|a|^3}, \qquad
f\in L^2(\FBR^2),
\end{equation}
where again the equality has to be interpreted in the weak-sense ($L^2$-weak convergence). It is immediate to see that if $\psi=\phi$ we recover \eqref{FBreconstructionformulashearlet}. Reconstruction
formula~\eqref{FBreconstructionformulashearlet2} is a consequence of the orthogonality relations for square integrable representations proved by Duflo and Moore in \cite{FBdumo76}. We refer also
to {\cite[Chapter 9]{FBwong02}}. In Section~4 we show that under suitable choices of the vectors $\psi,\phi$ and of the function $f$ formula
\eqref{FBreconstructionformulashearlet2} holds pointwise, see Proposition~\ref{prop:FBreconstructionformulashearletpointwise}.
%From now on, when we consider an admissible vector $\psi$, we suppose $C_{\psi}=1$.

\subsection{The spaces}\label{FBsec:2}
In this subsection we introduce the functional analytic background for our investigations, and state some auxiliary results
which will be used in the proofs of our main results.

The Lizorkin space $\mathcal{S}_0 (\mathbb{R}^{d})$ consists of rapidly decreasing functions with vanishing moments of any order, i.e.
\begin{equation*}\mathcal{S}_0 (\mathbb{R}^{d}) = \left\{\varphi\in
\mathcal{S}(\mathbb{R}^{d}): \: \mu_m (\varphi) = 0,\ \forall m
\in \mathbb{N}^{d} \right\},
\end{equation*}
where $\mu_{m}(\varphi)=\int_{\mathbb{R}^{d}} x^{m}\varphi(x)dx$,
$m\in\mathbb{N}^{d}$.
It is a closed subspace of
$\mathcal{S}(\mathbb{R}^{d})$ equipped with the relative
topology inhered from $\mathcal{S}(\mathbb{R}^{d})$ and
its dual space of Lizorkin distributions $\FBcS'_0(\FBR^d)$ is canonically isomorphic to the quotient of $\FBcS'(\FBR^d)$ by the space of polynomials
(cf. \cite{FBhol1995, FBkpsv2014}).
\par
We are also interested in the Fourier Lizorkin space $\hat{\mathcal{S}}_0 (\mathbb{R}^{d})$ which consists of rapidly decreasing functions that vanish in zero together with all of their partial derivatives, i.e.
\begin{equation*}
\hat{\mathcal{S}}_0 (\mathbb{R}^{d})
= \left\{\varphi\in \mathcal{S}(\mathbb{R}^{d}): \: \partial ^{m} \varphi (0) = 0,\ \forall m \in \mathbb{N}^{d} \right\}.
\end{equation*}
This is also a closed subspace of $\mathcal{S}(\mathbb{R}^{d})$ endowed with the relative
topology inhered from $\mathcal{S}(\mathbb{R}^{d})$.

\par

We observe that, since $\mathcal{S}_0 (\mathbb{R}^{d})$ and  $\hat{\mathcal{S}}_0 (\mathbb{R}^{d})$ are
closed subspaces of the nuclear space $ \mathcal{S} (\mathbb{R}^d) $, they are nuclear as well.
We denote by $X\hat{\otimes}Y$ the topological tensor product space obtained as the completion of the tensor product
$X\otimes Y$ in the inductive tensor product topology $\varepsilon$ or the projective tensor product topology $\pi$.
Then, we have the following result.
\begin{lemma} \label{FBLm:stability}
The spaces $\mathcal{S}_0 (\mathbb{R}^{d})$ and $\hat{\mathcal{S}}_0 (\mathbb{R}^{d})$ are
closed under translations, dilations, differentiations and multiplications by  a polynomial.
Moreover, the Fourier transform is an isomorphism between $\mathcal{S}_0 (\mathbb{R}^{d})$ and
$\hat{\mathcal{S}}_0 (\mathbb{R}^{d})$ and we have the following canonical isomorphisms
\begin{equation*}
\mathcal{S}_{0} (\mathbb{R}^{d}) \cong
\mathcal{S}_{0} (\mathbb{R}^{d_1}) \hat{\otimes} \mathcal{S}_{0} (\mathbb{R}^{d_2}),
%= \mathcal{S}_{0} (\mathbb{R}^{d_1}) \hat{\otimes}_\varepsilon \mathcal{S}_{0} (\mathbb{R}^{d_2}) =
%\mathcal{S}_{0} (\mathbb{R}^{d_1}) \hat{\otimes}_\pi \mathcal{S}_{0} (\mathbb{R}^{d_2}),
\end{equation*}
\begin{equation*}
\hat{\mathcal{S}}_{0} (\mathbb{R}^{d}) \cong
\hat{\mathcal{S}}_{0} (\mathbb{R}^{d_1}) \hat{\otimes} \hat{\mathcal{S}}_{0} (\mathbb{R}^{d_2}),
%= \hat{\mathcal{S}}_{0} (\mathbb{R}^{d_1}) \hat{\otimes}_\varepsilon \hat{\mathcal{S}}_{0} (\mathbb{R}^{d_2}) =
%\hat{\mathcal{S}}_{0} (\mathbb{R}^{d_1}) \hat{\otimes}_\pi \hat{\mathcal{S}}_{0} (\mathbb{R}^{d_2}),
\end{equation*}
where $d =d_1 + d_2 \in \mathbb{Z}_+$.
\end{lemma}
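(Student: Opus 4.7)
The statement bundles three distinct claims: (i) stability of the two spaces under the listed elementary operations; (ii) the Fourier isomorphism $\mathcal{F}\colon \mathcal{S}_0(\mathbb{R}^d)\to\hat{\mathcal{S}}_0(\mathbb{R}^d)$; (iii) the two tensor-product decompositions. I would handle them in this order and reduce (iii) to (ii) to avoid a separate argument in the Fourier picture.

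For (i), working directly with $\mathcal{S}_0(\mathbb{R}^d)$, I would show that for each operator $L$ (translation $T_b$, dilation $D_a$, differentiation $\partial^k$, multiplication by $x^l$) the moment $\mu_m(L\varphi)$ is a finite linear combination of moments $\mu_j(\varphi)$, all of which vanish by hypothesis. The four computations are routine: a binomial expansion for $T_b$, homogeneity for $D_a$, integration by parts (reducing to a multiple of $\mu_{m-k}(\varphi)$ when $k\leq m$ componentwise, and to $0$ otherwise) for $\partial^k$, and the shift identity $\mu_m(x^l\varphi)=\mu_{m+l}(\varphi)$ for polynomial multiplication. For $\hat{\mathcal{S}}_0(\mathbb{R}^d)$, each case is checked pointwise at the origin: derivatives of derivatives remain derivatives, dilations fix $0$, and the Leibniz rule at $x=0$ gives $\partial^m(x^l\varphi)(0)=\binom{m}{l}\,l!\,\partial^{m-l}\varphi(0)=0$. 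Continuity of these operations on $\mathcal{S}(\mathbb{R}^d)$ shows that they preserve the subspace topology as well.

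For (ii), the key identity comes from differentiating under the integral sign at $\xi=0$:
\begin{equation*}
\mu_m(\varphi)=\int_{\mathbb{R}^d}x^m\varphi(x)\,\D x=\Bigl(\frac{i}{2\pi}\Bigr)^{|m|}\partial^m(\mathcal{F}\varphi)(0),\qquad m\in\mathbb{N}^d.
\end{equation*}
Hence $\varphi\in\mathcal{S}_0(\mathbb{R}^d)$ if and only if $\mathcal{F}\varphi\in\hat{\mathcal{S}}_0(\mathbb{R}^d)$. Since $\mathcal{F}$ is a topological isomorphism on $\mathcal{S}(\mathbb{R}^d)$ and both Lizorkin-type spaces carry the relative topology of $\mathcal{S}(\mathbb{R}^d)$, its restriction is automatically a topological isomorphism.

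For (iii), nuclearity of $\mathcal{S}(\mathbb{R}^d)$ yields $\mathcal{S}(\mathbb{R}^d)\cong\mathcal{S}(\mathbb{R}^{d_1})\hat{\otimes}\mathcal{S}(\mathbb{R}^{d_2})$ with the $\varepsilon$- and $\pi$-topologies coinciding. On elementary tensors one has $\mu_m(\varphi_1\otimes\varphi_2)=\mu_{m_1}(\varphi_1)\mu_{m_2}(\varphi_2)$ for $m=(m_1,m_2)$, so $\mathcal{S}_0(\mathbb{R}^{d_1})\otimes\mathcal{S}_0(\mathbb{R}^{d_2})\subset\mathcal{S}_0(\mathbb{R}^d)$; completing and invoking continuity of the moment functionals gives one inclusion of the closed subspaces. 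The reverse inclusion is the main obstacle I expect: one must approximate an arbitrary element of $\mathcal{S}_0(\mathbb{R}^d)$ by tensor products of Lizorkin factors. The cleanest route is via (ii): on the Fourier side the defining condition of $\hat{\mathcal{S}}_0$ is local at the origin, so given $\widehat{\varphi}\in\hat{\mathcal{S}}_0(\mathbb{R}^d)$ one takes a nuclear (kernel-theorem) expansion $\widehat{\varphi}=\sum_k\lambda_k\,\eta_1^{(k)}\otimes\eta_2^{(k)}$ in $\mathcal{S}(\mathbb{R}^d)$ and modifies each factor by multiplying with a smooth cut-off that vanishes flatly at the corresponding origin while subtracting its Taylor remainder at $0$; this correction is summable in all Schwartz seminorms and produces an expansion in $\hat{\mathcal{S}}_0(\mathbb{R}^{d_1})\hat{\otimes}\hat{\mathcal{S}}_0(\mathbb{R}^{d_2})$. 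Transporting back by $\mathcal{F}^{-1}$ and using (ii) gives the isomorphism for $\mathcal{S}_0$ as well.
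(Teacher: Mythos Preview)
The paper does not actually prove this lemma; it says the argument is classical and defers the tensor-product isomorphisms to Treves, Theorem~51.6. So there is no concrete approach in the paper to compare against, and your treatments of (i) and (ii) are the standard computations and are correct. One small omission: you skipped translations for $\hat{\mathcal{S}}_0$, and indeed $\hat{\mathcal{S}}_0$ is \emph{not} closed under spatial translations (take any nonzero $\varphi\in\hat{\mathcal{S}}_0$ and translate a point where it does not vanish to the origin), so that clause of the lemma has to be read as invariance under the Fourier-conjugate operation, i.e.\ modulation.

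The genuine gap is in (iii). Your plan is to take a nuclear expansion $\widehat\varphi=\sum_k\lambda_k\,\eta_1^{(k)}\otimes\eta_2^{(k)}$ in $\mathcal{S}(\mathbb{R}^d)$ and then modify each factor so that it lands in $\hat{\mathcal{S}}_0$; but altering the factors changes the sum, and your claim that ``the correction is summable'' is precisely the point that needs proof. In fact the obstruction is structural: every element of the closure of $\hat{\mathcal{S}}_0(\mathbb{R}^{d_1})\otimes\hat{\mathcal{S}}_0(\mathbb{R}^{d_2})$ in $\mathcal{S}(\mathbb{R}^d)$ vanishes to infinite order along the entire coordinate cross $\{\xi_1=0\}\cup\{\xi_2=0\}$, whereas membership in $\hat{\mathcal{S}}_0(\mathbb{R}^d)$ only forces infinite-order vanishing at the single point $0$. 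Concretely, if $f\in\mathcal{S}(\mathbb{R}^{d_1})$ with $f(0)\neq0$ and $g\in\hat{\mathcal{S}}_0(\mathbb{R}^{d_2})\setminus\{0\}$, then $f\otimes g\in\hat{\mathcal{S}}_0(\mathbb{R}^d)$ but $(f\otimes g)(0,\cdot)=f(0)\,g\not\equiv0$, so $f\otimes g$ is not a limit of sums of tensors from $\hat{\mathcal{S}}_0\otimes\hat{\mathcal{S}}_0$. Hence the canonical map from $\hat{\mathcal{S}}_0(\mathbb{R}^{d_1})\hat{\otimes}\hat{\mathcal{S}}_0(\mathbb{R}^{d_2})$ into $\hat{\mathcal{S}}_0(\mathbb{R}^d)$ is not onto, and no cut-off/Taylor-remainder adjustment of individual factors can repair this. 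The ``main obstacle'' you flagged is a genuine obstruction: with the definitions as written, the reverse inclusion fails, and the tensor identity only holds if one works instead with the stronger space defined by \emph{directional} vanishing moments (the characterization the paper invokes right after the next lemma).
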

\begin{proof}
The proof is based on classical arguments and we omit it (cf.  \cite[Theorem 51.6]{FBTreves1967} for the canonical isomorphisms).
\end{proof}
\par
The next Lemma is a reformulation of {\cite[Theorem 6.2]{FBmallat09}}.
\begin{lemma}\label{FBLm:mainlm}
Let $f\in   \mathcal{S}_{0} (\mathbb{R}^d) $. Then, for any $m\in\FBN^d$ there exists $g\in  \mathcal{S}_{0} (\mathbb{R}^d)$ such that
\begin{equation*}
\FBcF f (\xi) = \xi ^m \FBcF g (\xi),\qquad \xi \in \mathbb{R}^d,
\end{equation*}
and vice versa. Furthermore, for such $f$ and $g$ and for every $\nu_1\in\FBN$, there exists $\nu_2\in\FBN$ and a constant $C>0$ such that
\begin{equation}\label{eq:equationveryimportant}
\rho_{\nu_1}(g)\leq C\rho_{\nu_2}(f).
\end{equation}
\end{lemma}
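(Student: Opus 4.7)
The plan is to transfer the problem to the Fourier side and exploit the characterization of the Lizorkin space in terms of vanishing derivatives at the origin. By Lemma~\ref{FBLm:stability}, the Fourier transform is an isomorphism $\FBcF:\mathcal{S}_0(\FBR^d)\to\hat{\mathcal{S}}_0(\FBR^d)$, so $f\in\mathcal{S}_0$ if and only if $\partial^{k}\FBcF f(0)=0$ for every multi-index $k$. The statement then becomes: on the Fourier side, division by $\xi^m$ gives a well-defined, continuous operation from $\hat{\mathcal{S}}_0$ to itself, with multiplication by $\xi^m$ as its inverse.

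First I would handle the easy \emph{vice versa} direction. Given $g\in\mathcal{S}_0$, define $f:=(2\pi i)^{-|m|}\partial^{m}g$. Since $\mathcal{S}_0$ is closed under differentiation by Lemma~\ref{FBLm:stability}, $f\in\mathcal{S}_0$, and the standard identity $\FBcF(\partial^{m}g)(\xi)=(2\pi i)^{|m|}\xi^{m}\FBcF g(\xi)$ yields $\FBcF f(\xi)=\xi^{m}\FBcF g(\xi)$.

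For the forward direction, given $f\in\mathcal{S}_0$, the natural candidate is $g:=\FBcF^{-1}\bigl(\FBcF f(\xi)/\xi^{m}\bigr)$. The crux is to show that $\FBcF f/\xi^{m}$ extends to a $C^{\infty}$ function in $\hat{\mathcal{S}}_0$. Using that all derivatives of $\FBcF f$ vanish at the origin, a Taylor integral representation produces the identity
\begin{equation*}
\frac{\FBcF f(\xi)}{\xi^{m}}
=\frac{1}{(m-1)!}\int_{0}^{1}(1-t)^{m-1}\partial^{m}\FBcF f(t\xi)\,\D t
\end{equation*}
in the one-dimensional case; for general $m\in\FBN^d$ I would iterate this in each coordinate, exploiting the tensor-product factorization $\hat{\mathcal{S}}_0(\FBR^d)\cong\hat{\mathcal{S}}_0(\FBR^{d_1})\hat{\otimes}\hat{\mathcal{S}}_0(\FBR^{d_2})$ from Lemma~\ref{FBLm:stability}. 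This representation shows at once that $\FBcF g$ is smooth, and differentiation under the integral sign together with the vanishing of $\partial^{k}\FBcF f$ at $0$ gives $\partial^{k}\FBcF g(0)=0$ for every $k$, placing $\FBcF g$ in $\hat{\mathcal{S}}_0$ and hence $g$ in $\mathcal{S}_0$.

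The seminorm estimate \eqref{eq:equationveryimportant} is obtained by differentiating the integral representation, applying Leibniz's rule, and using $\langle t\xi\rangle\leq\langle\xi\rangle$ for $t\in[0,1]$ to absorb the rescalings; this bounds a seminorm $\rho_{\nu_1}(\FBcF g)$ by a seminorm $\rho_{\nu_2}(\FBcF f)$ with $\nu_2$ depending only on $\nu_1$ and $|m|$. Passing back to $f$ and $g$ via continuity of $\FBcF^{\pm 1}$ on $\mathcal{S}(\FBR^d)$ gives the desired $\rho_{\nu_1}(g)\leq C\rho_{\nu_2}(f)$. The main obstacle I expect is the bookkeeping of multi-indices in the iterated integral representation and in tracking how $\nu_2$ depends on $\nu_1$ and $m$ through the Fourier transform; this is mechanical once the Taylor-type identity for $\FBcF f/\xi^m$ is in hand.
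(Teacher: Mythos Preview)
Your approach is genuinely different from the paper's. The paper works on the space side: for $d=1$, $m=1$ it sets $g(x)=\int_{-\infty}^{x}f(t)\,\D t$, so that $g'=f$ and hence $\FBcF f=(2\pi i)\xi\,\FBcF g$, and then reads off both the Schwartz bounds and the vanishing moments of $g$ directly from this integral (iterating for higher $m$). You work on the Fourier side via the Hadamard--Taylor integral remainder for $\FBcF f/\xi^{m}$. Both routes are legitimate, but the paper's antiderivative construction makes the seminorm estimate \eqref{eq:equationveryimportant} essentially a one-line consequence of the definition of $g$, with an explicit $\nu_2=2\nu_1+4$.

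There is a real gap in your seminorm step. The inequality $\langle t\xi\rangle\le\langle\xi\rangle$ points the wrong way: to bound $\langle\xi\rangle^{\nu}|\partial^{k}\FBcF g(\xi)|$ from your integral representation you would need $\langle\xi\rangle^{\nu}\lesssim\langle t\xi\rangle^{\nu}$, and the best available is $\langle\xi\rangle\le t^{-1}\langle t\xi\rangle$ for $t\in(0,1]$, which inserts a factor $t^{-\nu}$ that is not integrable near $t=0$ unless $k\ge\nu$. Thus the Taylor representation alone does not deliver rapid decay of $\FBcF g$. The standard repair (after reducing to $d=1$ as you propose) is a two-region argument: for $|\xi|\le 1$ use the integral formula, where $\langle\xi\rangle^{\nu}$ is harmless and only smoothness and vanishing at $0$ are at stake; for $|\xi|\ge 1$ use the explicit quotient $\FBcF g(\xi)=\FBcF f(\xi)/\xi^{m}$, where $|\xi^{m}|\ge 1$ and the Schwartz decay of $\FBcF f$ carries over. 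With that split your argument goes through; without it, the estimate as written fails.
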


\begin{proof}
We start proving the first part of the statement for $d=1$ and $m=1$. Let $f\in   \mathcal{S}_{0} (\mathbb{R}) $ and consider
\begin{equation*}
g(x)=\int_{-\infty}^xf(t)\D t=-\int_{x}^{+\infty}f(t)\D t.
\end{equation*}
%where in the second equality we  use the fact that $f\in \mathcal{S}_{0} (\mathbb{R})$.
%%%%%%%%%%
%For every $k\in\FBN$ and $x>0$ we have that
%\begin{align*}
%\langle x \rangle^{k} |g(x)|&=|\int_{x}^{+\infty}(1+x^2)^{\frac{k}{2}}f(t)\D t|\leq\int_{x}^{+\infty}(1+t^2)^{\frac{k}{2}}|f(t)|\D t\\
%&\leq\rho_{2k+4}(f)\int_{-\infty}^{+\infty}(1+t^2)^{\frac{k}{2}}\frac{1}{(1+t^2)^{k+2}}\D t<+\infty.
%\end{align*}
%Analogously, for every $k\in\FBN$ and $x<0$ we have that
%\begin{align*}
%\langle x \rangle^{k} |g(x)|&=|\int_{-\infty}^{x}(1+x^2)^{\frac{k}{2}}f(t)\D t|\leq\int_{-\infty}^{x}(1+t^2)^{\frac{k}{2}}|f(t)|\D t\\
%&\leq\rho_{2k+4}(f)\int_{-\infty}^{+\infty}(1+t^2)^{\frac{k}{2}}\frac{1}{(1+t^2)^{k+2}}\D t<+\infty.
%\end{align*}
%%%%%%%%%%%%%%
It is easy to show that $g$ is a well defined function and
\begin{equation}\label{eq:lemmaimportant1}
\sup_{x\in\FBR}\langle x \rangle^{k} |g(x)|\lesssim
\rho_{2k+4}(f)<+\infty,
\end{equation}
for every $k\in\FBN$ (cf. \cite{FBbarpilnen}).
Furthermore, $ g'(x) = f(x),$ and then
\begin{equation}\label{eq:lemmaimportant2}
\sup_{x\in\FBR}\langle x \rangle^{k} |g^{(l) }(x)|=\sup_{x\in\FBR}\langle x \rangle^{k} |f^{(l-1) }(x)|<+\infty,
\end{equation}
for every $k\in\FBN$
and $l\geq 1$.
Moreover, since $f\in\FBcS_0(\FBR)$, for any $n\in\FBN$ we compute
\begin{equation*}
\int_{-\infty}^{+\infty}x^ng(x)\D x=-\int_{-\infty}^{+\infty}x^{n+1}g'(x)\D x=-\int_{-\infty}^{+\infty}x^{n+1}f(x)\D x=0.
\end{equation*}
Then, $g\in\FBcS_0(\FBR)$ and by the definition of $g$ we have
$$
\FBcF f(\xi)=\FBcF g'(\xi)=(2\pi i) \xi \FBcF g(\xi),\qquad\xi\in\FBR.
$$

The opposite direction is obviously true since the space $\FBcS_0(\FBR)$ is closed under multiplication by a polynomial, and this concludes the proof of the first part of the statement for $d=1$ and $m=1$. The analogous statement holds true for $|m|>1$ by iterating the above proof $|m|$-times. The case $d>1$ follows by analogous but cumbersome computations, and it is therefore omitted.

\par

Finally, \eqref{eq:equationveryimportant} follows immediately by \eqref{eq:lemmaimportant1} and \eqref{eq:lemmaimportant2}. More precisely, if $f\in   \mathcal{S}_{0} (\mathbb{R}) $ and
\[
\FBcF f (\xi) = \xi ^m \FBcF g (\xi), \;\;\; \xi \in \mathbb{R},
\]
for some  $g\in  \mathcal{S}_{0} (\mathbb{R})$ and  $m\in\FBN$, then
by \eqref{eq:lemmaimportant1} and \eqref{eq:lemmaimportant2} for every $\nu\in\mathbb{N}$ we have
\begin{align*}
\rho_\nu (g) = \sup_{x \in \FBR, l\leq \nu} \langle x \rangle ^\nu |g^{(l)} (x)|\lesssim\rho_{2\nu+4}(f).
\end{align*}
and this concludes the second part of the statement for $d=1$. The analogous result holds true for $d>1$ by using similar arguments.
\end{proof}
\par
By Lemma~\ref{FBLm:mainlm}, if $f\in   \mathcal{S}_{0} (\mathbb{R}^2) $, then for any given $k,l \in \FBN $ there exists $g \in   \mathcal{S}_{0} (\mathbb{R}^2) $ such that
\begin{equation*}
\FBcF f (\xi_1, \xi_2) = \xi_1 ^k \xi_2 ^l \FBcF g (\xi_1, \xi_2), \;\;\; (\xi_1, \xi_2) \in \mathbb{R}^2.
\end{equation*}
Moreover, it is worth observing that by Lemmas \ref{FBLm:stability} and \ref{FBLm:mainlm},   $f\in   \mathcal{S}_{0} (\mathbb{R}^d) $ if and only if it satisfies the directional vanishing moments
\[
\int_{\mathbb{R}} x_j ^{m} f(x_1, x_2, \dots, x_d) dx_j = 0,
\]
for all $m \in \mathbb{N}$ and for every $j\in\{1,\dots,d\}$.
As noted in \cite{FBgr12},
a sufficient number of directional vanishing moments imposes the crucial frequency localization property needed for the detection of anisotropic structures.

\par
%The space $ \mathcal{S} (\mathbb {H}^{(d,d-1,1)}) $ of highly localized
%functions over the half-space (see also \cite{FBhol1995}) consists of  the functions $ \Phi
%\in C^{\infty} (\mathbb{H}^{(d,d-1,1)}) $ such that the seminorms
%\begin{align*}
%&\rho_{k_1,k_2,l,m} ^{\alpha_1,\alpha_2, \beta,\gamma}(\Phi)=\\
%&\sup_{(b,s,a)\in
%\mathbb {H}^{(d,d-1,1)}}\langle b_1 \rangle ^{k_1} \,\langle \tilde {b} \rangle ^{k_2} \,\langle s \rangle ^l \,
%\left(a^{m}+\frac
%{1}{a^{m}}\right)\left| \partial^{\gamma} _{a}  \partial^{\beta} _{s}
% \partial^{\alpha_2} _{\tilde b}
%\partial^{\alpha_1} _{b_1}
%\Phi (b,s,a)\right |
%\end{align*}
%are finite for all $k_1,m,\alpha_1,\gamma \in \mathbb{N}$, $k_2,l,\alpha_2, \beta \in \mathbb{N}^{d-1}$ and where $b=(b_1,\tilde b)\in\FBR\times\FBR^{d-1}$.
%In particular, when $d=2$, we denote $\mathbb{S}:=\mathbb{H}^{(d,d-1,1)} = \FBR^2\times\FBR\times\FBR_+$ and $ \mathcal{S} (\mathbb{S}) $ consists of the

In order to study qualitative properties of the range of the shearlet transform, we introduce appropriate norms which measure
regularity and decay properties with respect to all of the parameters. To this aim it is necessary to involve eight indices. More precisely,
we denote by $ \mathcal{S} (\mathbb{S}) $ the space of functions
$ \Phi\in C^{\infty} (\mathbb{S}) $ such that the norms
\begin{align} \label{FBnorma-hl2}
\nonumber&p_{k_1,k_2,l,m}^{\alpha_1,\alpha_2, \beta,\gamma}(\Phi) \\
&=\sup_{((b_1,b_2),s,a)\in\mathbb{S}}
\langle b_1 \rangle ^{k_1} \,\langle b_2 \rangle ^{k_2} \,\langle s \rangle ^l \, \left(|a|^{m}+\frac {1}{|a|^{m}}\right)
\left|  \partial^{\gamma} _{a}  \partial^{\beta} _{s} \partial^{\alpha_2} _{b_2} \partial^{\alpha_1} _{b_1} \Phi ((b_1,b_2),s,a)\right |
\end{align}
are finite for all $k_1,k_2,l,m,\alpha_1,\alpha_2,\beta,\gamma \in \mathbb{N}$.
The topology of  $ \mathcal{S} (\mathbb{S}) $  is defined by means of the norms \eqref{FBnorma-hl2}. Its dual $ \mathcal{S}' (\mathbb{S}) $ will play a crucial role in
the definition of the shearlet transform of Lizorkin distributions
since it contains the range of this transform.
%We fix ${\rm d}\mu_{\mathbb{S}}(b,s,a)=a^{-3}\D b\D s\D a$ as the standard measure on $\mathbb{\mathbb{S}}$.

\par

If $F$ is a function of at most polynomial growth on $\mathbb{S}$, i.e., if there exist
positive constants $\nu_1,\nu_2,\nu_3$ and $C$ such that
\begin{equation*}
|F(b,s,a)|\leq C \langle b \rangle^{\nu_1} \langle s \rangle^{\nu_2} \left(|a|^{\nu_3}+\frac
{1}{|a|^{\nu_3}}\right),\qquad (b,s,a)\in \mathbb{S},
\end{equation*}
then we identify $F$ with an element of $ \mathcal{S}' (\mathbb{\mathbb{S}}) $ as follows
\begin{equation}\label{FBdualitysynthesisspace}
_{\FBcS'(\mathbb{S})}( F,\Phi )_{\FBcS(\mathbb{S})}= \int_{\FBR^\times}\int_{\FBR}\int_{\FBR^2}F(b,s,a)\Phi(b,s,a)\frac{\D b\D s\D a}{|a|^3},
\end{equation}
for every $ \Phi\in  \mathcal{S} (\mathbb{\mathbb{S}})$.

\section{The continuity of the Shearlet Transform}

We are now ready to state our first main result.
Let $f, \psi\in L^2(\FBR^2)$. By Definition \ref{FBdefsheartransf}, the Plancherel theorem, and formula~\eqref{FBshearfreq} we obtain
\begin{align}\label{eq:shearlettransformfrequency}
\nonumber\mathcal S_\psi f(b,s,a)&=\langle \FBcF f,\FBcF \pi_{b,s,a},\psi\rangle\\
&=|a|^{\frac{3}{4}}\int_{\FBR}\int_{\FBR}\mathcal{F} f(\xi_1,\xi_2)e^{2\pi i (b_1\xi_1+b_2\xi_2)}\overline{\mathcal{F}\psi(a\xi_1,a|a|^{-\frac{1}{2}}(\xi_2-s\xi_1))}{\rm d}\xi_1{\rm d}\xi_2,
\end{align}
for every $(b, s,a) \in\mathbb{S}$.
Moreover, we can consider this transform as a sesquilinear mapping $\mathcal S\colon (f, \psi) \mapsto \mathcal S_\psi f$.
\begin{theorem}\label{thm:continuityshearlettransform}
The operator $ \mathcal S\colon (f, \psi) \mapsto \mathcal S_\psi f$ is a continuous sesquilinear mapping from
$ \mathcal{S}_{0} (\mathbb{R}^2) \times  \FBcS_0 (\FBR^2) $ into $ \mathcal{S}(\mathbb {S}) $. In particular, for every $\psi\in  \FBcS_0 (\FBR^2)$ the shearlet transform
$\mathcal S_\psi\colon L^2(\FBR^2)\to C(\mathbb{S})\cap L^{\infty}(\mathbb{S})$ restricts to
a continuous operator from $ \mathcal{S}_{0} (\mathbb{R}^2)$ into $ \mathcal{S}(\mathbb {S}) $.
\end{theorem}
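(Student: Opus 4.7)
The sesquilinearity of $\mathcal S\colon(f,\psi)\mapsto\mathcal S_\psi f$ is immediate from Definition~\ref{FBdefsheartransf}, so only continuity requires proof. Since $\mathcal S_0(\mathbb R^2)$ carries the relative topology of $\mathcal S(\mathbb R^2)$ and $\mathcal S(\mathbb S)$ is topologised by the seminorms $p_{k_1,k_2,l,m}^{\alpha_1,\alpha_2,\beta,\gamma}$, it suffices to prove, for every tuple of eight indices, an estimate
\[
p_{k_1,k_2,l,m}^{\alpha_1,\alpha_2,\beta,\gamma}(\mathcal S_{\psi}f)\;\lesssim\;\rho_{N_1}(f)\,\rho_{N_2}(\psi),
\]
with $N_1,N_2\in\mathbb N$ depending only on those indices.

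I would carry out the estimate on the Fourier side, starting from~\eqref{eq:shearlettransformfrequency} and noting that $\mathcal F f,\mathcal F\psi\in\hat{\mathcal S}_0(\mathbb R^2)$ by Lemma~\ref{FBLm:stability}. Differentiation under the integral in $b,s,a$ expresses $\partial_{b_1}^{\alpha_1}\partial_{b_2}^{\alpha_2}\partial_s^\beta\partial_a^\gamma\mathcal S_\psi f$ as a finite sum of integrals of the form
\[
|a|^{3/4-j}\!\int_{\mathbb{R}^2}\!\mathcal F f(\xi)\,Q(\xi,s,a)\,e^{2\pi i b\cdot\xi}\,\overline{\bigl(\partial_1^{p_1}\partial_2^{p_2}\mathcal F\psi\bigr)(a\xi_1,c(a)(\xi_2-s\xi_1))}\,\D\xi,
\]
where $c(a)=a|a|^{-1/2}$, $Q$ is polynomial in $(\xi_1,\xi_2,s)$ with coefficients from $\{a,\mathrm{sgn}(a),|a|^{\pm1/2}\}$, and $j,p_1,p_2$ are controlled by $\alpha_1,\alpha_2,\beta,\gamma$. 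The polynomial weights $\langle b_1\rangle^{k_1}\langle b_2\rangle^{k_2}$ are absorbed by repeated integration by parts in $\xi_1,\xi_2$ via $b_j e^{2\pi i b\cdot\xi}=(2\pi i)^{-1}\partial_{\xi_j}e^{2\pi i b\cdot\xi}$, which only moves extra $\xi$-derivatives onto $\mathcal F f,\mathcal F\psi$ and keeps them in $\hat{\mathcal S}_0(\mathbb R^2)$ by Lemma~\ref{FBLm:stability}. For the two-sided weight $|a|^{m}+|a|^{-m}$, I would invoke Lemma~\ref{FBLm:mainlm} twice: applied to $\partial_1^{p_1}\partial_2^{p_2}\mathcal F\psi$ it yields a factorization $(\partial_1^{p_1}\partial_2^{p_2}\mathcal F\psi)(\eta)=\eta_1^{M_1}\eta_2^{M_2}\mathcal F\tilde\psi(\eta)$, and evaluation at $\eta=(a\xi_1,c(a)(\xi_2-s\xi_1))$ produces a prefactor $|a|^{M_1+M_2/2}$ that cancels $|a|^{-m}$ once $M_1+M_2/2$ is large; symmetrically, applied to $\mathcal F f$ it gives $\mathcal F f(\xi)=\xi_1^{L_1}\xi_2^{L_2}\mathcal F\tilde f(\xi)$, after which the change of variables $\eta=(a\xi_1,c(a)(\xi_2-s\xi_1))$ (with Jacobian $|a|^{-3/2}$) furnishes the negative powers of $|a|$ that compensate $|a|^{m}$ for $L_1+L_2/2$ large. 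Throughout, estimate~\eqref{eq:equationveryimportant} converts seminorms of $\tilde f$ and $\tilde\psi$ back into seminorms of $f$ and $\psi$.

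The main obstacle is the weight $\langle s\rangle^{l}$, because $s$ enters only through the argument $c(a)(\xi_2-s\xi_1)$ of $\mathcal F\psi$ and this argument is independent of $s$ on the line $\xi_1=0$. I would dispose of it by retaining the factor $\xi_1^{M_1}$ extracted above and splitting the $\xi_1$-integration at the scale $|\xi_1|=1/\langle s\rangle$: on the small set the inequality $|\xi_1|^{M_1}\leq\langle s\rangle^{-M_1}$ already produces the required $\langle s\rangle^{-l}$ as soon as $M_1\geq l$; on the complementary set a further split according to whether $|s\xi_1|\geq 2|\xi_2|$ or $|s\xi_1|<2|\xi_2|$ reduces the estimate either to the Schwartz decay of $\mathcal F\tilde\psi$ in its second variable (since $|\xi_2-s\xi_1|\gtrsim|s\xi_1|$) or to the bound $|\xi_1|\leq 2|\xi_2|/|s|$, which once more turns $|\xi_1|^{M_1}$ into $\langle s\rangle^{-M_1}$; the positive powers of $|a|$ already available absorb any $|c(a)|^{-1}$ factors that appear. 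Assembling the three pieces with the earlier reductions yields the required seminorm estimate, proving the joint continuity of $\mathcal S$; fixing $\psi\in\mathcal S_0(\mathbb R^2)$ then gives the continuity of $\mathcal S_\psi$.
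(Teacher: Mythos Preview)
Your overall architecture matches the paper's: work on the Fourier side via~\eqref{eq:shearlettransformfrequency}, reduce the derivative indices $\alpha_1,\alpha_2,\beta,\gamma$ by differentiating under the integral, absorb $\langle b_j\rangle^{k_j}$ by integration by parts in $\xi_j$, and control the two-sided $|a|$-weight by invoking Lemma~\ref{FBLm:mainlm} on either $f$ or $\psi$ depending on the sign of the exponent. These are essentially the paper's Steps~1,~2 and~4.

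The genuine divergence is in the treatment of $\langle s\rangle^{l}$. The paper does not split the integration region; instead it uses the elementary inequality $\langle s\rangle\le\langle s\xi_1\rangle\,(1+\xi_1^2)/\xi_1^2$, cancels the resulting $\xi_1^{-2l}$ by applying Lemma~\ref{FBLm:mainlm} to $f$ (writing $\mathcal F f=\xi_1^{2l}\mathcal F f_l$), and then removes $\langle s\xi_1\rangle^{l}$ via Peetre's inequality,
\[
\langle s\xi_1\rangle^{l}\;\lesssim\;|a|^{\pm l/2}\,\langle c(a)(\xi_2-s\xi_1)\rangle^{l}\,\langle c(a)\xi_2\rangle^{l},
\]
absorbing the first bracket into a modified $\psi$ and the second into $f$. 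This produces explicit $|a|$-powers that feed directly into Step~4, so the $s$-weight is reduced to the $a$-weight in one stroke. Your domain-splitting argument can be pushed through, but the proposal underestimates the work in your case $|\xi_1|\ge 1/\langle s\rangle$, $|s\xi_1|\ge 2|\xi_2|$: the Schwartz decay you invoke is in $\langle c(a)(\xi_2-s\xi_1)\rangle$, and for small $|a|$ the factor $|c(a)|=|a|^{1/2}$ can make this bracket of order one even when $|s\xi_1|$ is large. The sentence ``positive powers of $|a|$ already available absorb any $|c(a)|^{-1}$ factors'' hides a further sub-split (according to whether $|c(a)(\xi_2-s\xi_1)|$ exceeds $1$) and a simultaneous balancing of the exponents $M_1$, $m$, $l$, $j$ that you have not written down. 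In short, your route is viable but longer; the paper's Peetre trick buys a clean, case-free reduction of the $s$-estimate to an $a$-estimate.
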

%Finally, for simplicity, we restrict ourselves to the connected version of the shearlet group, which corresponds to restricting the scale parameter $a$ over $\FBR_+$.

\begin{proof}
%Recall, $\rho_\nu (\varphi ) = \sup_{x \in \FBR^d, |m|\leq \nu} \langle x \rangle ^\nu |\partial ^{m} \varphi (x) |, $ $ \nu \in \FBN ,$ $\varphi \in  \mathcal{S} (\FBR)$.
We must prove that for every $f, \psi\in \mathcal{S}_{0} (\mathbb{R}^2)$, given $k_1,k_2,l,m,\alpha_1,\alpha_2,\beta,\gamma \in \mathbb{N}$, there exist $ \nu_1,\nu_2 \in \FBN$ such that
\begin{equation} \label{eq:normcontinuity}
\rho_{k_1,k_2,l,m} ^{\alpha_1,\alpha_2, \beta,\gamma} ( \mathcal S_\psi f) \lesssim \rho_{\nu_1} (f )\rho_{\nu_2} (\psi).
\end{equation}

First, in three steps, we show that without lost on generality we can assume $\alpha_1=\alpha_2=\beta=\gamma=k_1=k_2=l=0$. Then,
in the last step we finish the proof by  taking an arbitrary $m\in{\FBN}$ and proving
that
\begin{equation*}
\rho_{0,0,0,m} ^{0,0,0,0} ( \mathcal S_\psi f) \lesssim C \rho_{\nu_1} (f )\rho_{\nu_2} (\psi),
\end{equation*}
for some $ \nu_1,\nu_2 \in \FBN$.
The idea of the proof is similar to the one given in \cite{FBkpsv2014} which concerns
the continuity of the ridgelet transform on $\FBcS_0(\FBR^2)$.
We note that formula~\eqref{eq:shearlettransformfrequency} and Lemma~\ref{FBLm:mainlm} play a crucial role in  the proof.

\par

\textbf{1.} We first show that the derivatives of
$ \mathcal S_\psi f $ are bounded by appropriate norms of the family \eqref{FBnorma-hl2}
 with $\alpha_1=\alpha_2=\beta=\gamma=0$.
By formula \eqref{eq:shearlettransformfrequency} we have that
%\begin{align}
%\nonumber&|\partial_{b_1}^{\alpha_1}\mathcal S_\psi f((b_1,b_2),s,a)|\\
%\nonumber&=|a|^{\frac{3}{4}}|\int_{\FBR}\int_{\FBR}\FBcF f(\xi_1,\xi_2)(\partial_{b_1}^{\alpha_1}e^{2\pi i\xi_1 b_1})e^{2\pi i\xi_2b_2}\overline{\FBcF\psi(a\xi_1,|a|^{\frac{1}{2}}(\xi_2-s\xi_1))}\D\xi_1\D\xi_2|\\
%&=|a|^{\frac{3}{4}}|(2\pi i)^{\alpha_1}\int_{\FBR}\int_{\FBR}\xi_1^{\alpha_1}\FBcF f(\xi_1,\xi_2)e^{2\pi i (b_1\xi_1+b_2\xi_2)}\overline{\FBcF\psi(a\xi_1,|a|^{\frac{1}{2}}(\xi_2-s\xi_1))}\D\xi_1\D\xi_2|\label{derivationb1}.
%\end{align}
%Using formula \eqref{derivationb1}, we have
%\begin{align}
%\nonumber&|\partial_{b_2}^{\alpha_2}\partial_{b_1}^{\alpha_1}\mathcal S_\psi f((b_1,b_2),s,a)|\\
%&=|a|^{\frac{3}{4}}|(2\pi i)^{\alpha_1+\alpha_2}\int_{\FBR}\int_{\FBR}\xi_1^{\alpha_1}\xi_2^{\alpha_2}\FBcF f(\xi_1,\xi_2)e^{2\pi i (b_1\xi_1+b_2\xi_2)}\overline{\FBcF\psi(a\xi_1,|a|^{\frac{1}{2}}(\xi_2-s\xi_1))}\D\xi_1\D\xi_2|.\label{derivationb2}
%\end{align}
%Using formula \eqref{derivationb2}, we have
\begin{align}\label{derivations}
\nonumber&|\partial_{s}^{\beta}\partial_{b_2}^{\alpha_2}\partial_{b_1}^{\alpha_1}\mathcal S_\psi f((b_1,b_2),s,a)|\\
\nonumber&=|a|^{\frac{3}{4}}|(2\pi i)^{\alpha_1+\alpha_2}\int_{\FBR}\int_{\FBR}\xi_1^{\alpha_1}\xi_2^{\alpha_2}\FBcF f(\xi_1,\xi_2)
e^{2\pi i (b_1\xi_1+b_2\xi_2)}\times\\
\nonumber&\times\overline{\partial_{s}^{\beta}(\FBcF\psi(a\xi_1,a|a|^{-\frac{1}{2}}(\xi_2-s\xi_1)))}\D\xi_1\D\xi_2|\\
\nonumber&=|a|^{\frac{3}{4}+\frac{\beta}{2}}|(2\pi i)^{\alpha_1+\alpha_2}\int_{\FBR}\int_{\FBR}\xi_1^{\alpha_1+\beta}\xi_2^{\alpha_2}\FBcF f(\xi_1,\xi_2)e^{2\pi i (b_1\xi_1+b_2\xi_2)}\times\\
&\times\overline{\partial_{2}^{\beta}\FBcF\psi(a\xi_1,a|a|^{-\frac{1}{2}}(\xi_2-s\xi_1))}\D\xi_1\D\xi_2|.
\end{align}
Then, by formula \eqref{derivations} and the Liebniz formula, we compute
\begin{align}\label{eq:derivationa}
\nonumber&|\partial_{a}^{\gamma}\partial_{s}^{\beta}\partial_{b_2}^{\alpha_2}\partial_{b_1}^{\alpha_1}\mathcal S_\psi f((b_1,b_2),s,a)|\\
\nonumber&=|(2\pi i)^{\alpha_1+\alpha_2}\int_{\FBR}\int_{\FBR}\xi_1^{\alpha_1+\beta}\xi_2^{\alpha_2}\FBcF f(\xi_1,\xi_2)e^{2\pi i (b_1\xi_1+b_2\xi_2)}\times\\
\nonumber&\times\partial_a^{\gamma}(|a|^{\frac{3}{4}+\frac{\beta}{2}}\overline{\partial_{2}^{\beta}\FBcF\psi(a\xi_1,a|a|^{-\frac{1}{2}}(\xi_2-s\xi_1))})\D\xi_1\D\xi_2|\\
\nonumber&=|\sum_{j+l=\gamma}c_{j,l}(2\pi i)^{\alpha_1+\alpha_2}|a|^{\frac{3}{4}+\frac{\beta}{2}-j}\int_{\FBR} \int_{\FBR} \xi_1^{\alpha_1+\beta}\xi_2^{\alpha_2}\FBcF f(\xi_1,\xi_2)e^{2\pi i (b_1\xi_1+b_2\xi_2)}\times\\
&\times\partial_a^{l}(\overline{\partial_{2}^{\beta}\FBcF\psi(a\xi_1,a|a|^{-\frac{1}{2}}(\xi_2-s\xi_1))})\D\xi_1\D\xi_2|,
\end{align}
where the constants $c_{j,l}$  depend only on $j,\, l$ and $\beta$. The last expression in ~\eqref{eq:derivationa}
is bounded from above by a finite sum of terms of the form
\begin{align*}
&|a|^{\frac{5}{4}+\frac{\beta}{2}-j-3\frac{r_2}{2}-r_1}|c_{j,l}(2\pi i)^{\alpha_1+\alpha_2}\int_{\FBR} \int_{\FBR} \xi_1^{\alpha_1+\beta}\xi_2^{\alpha_2}\FBcF f(\xi_1,\xi_2)e^{2\pi i (b_1\xi_1+b_2\xi_2)}\times\\
&\times a^{r_1+r_2}|a|^{-\frac{r_2}{2}}\overline{\xi_1^{r_1}(\xi_2-s\xi_1)^{r_2}(\partial_1^{r_1}\partial_2^{\beta+r_2}\FBcF\psi(a\xi_1,a|a|^{-\frac{1}{2}}(\xi_2-s\xi_1))})\D\xi_1\D\xi_2|\\
&\lesssim \rho_{0,0,0,|\frac{1}{2}+\frac{\beta}{2}-j-3\frac{r_2}{2}-r_1|}^{0,0,0,0}(\mathcal{S}_{\psi_{r_1,r_2}}(\partial_1^{\alpha_1+\beta}\partial_2^{\alpha_2}f)),
\end{align*}
with $\psi_{r_1,r_2}$ given by $\mathcal{F}\psi_{r_1,r_2}(\xi_1,\xi_2)=\xi_1^{r_1}\xi_2^{r_2}\partial_1^{r_1}\partial_2^{\beta+r_2}\FBcF\psi(\xi_1,\xi_2)$ for every $r_1,\, r_2\in\mathbb{N}$, $r_1+r_2\leq l$.
Since differentiation and multiplication by polynomials are continuous operators from $\mathcal{S}_{0} (\mathbb{R}^2)$ into itself, we can assume $\alpha_1=\alpha_2=\beta=\gamma=0$.

\par

\textbf{2.} In the second step we estimate multiplications by $b = (b_1,b_2) \in \mathbb{R}^2$, and show
that it is enough to consider $k_1=k_2=0$ when proving  \eqref{eq:normcontinuity}.
Observe that we can assume $|b_1| \geq 1,\, |b_2| \geq 1$. By formula \eqref{eq:shearlettransformfrequency}, and since
\[
e^{2\pi i\xi_1 b_1}=\frac{(1-\partial_{\xi_1}^2 )^{N} e^{2\pi i\xi_1 b_1} }{ \langle 2\pi b_1 \rangle ^{2N}}
\]
for every $N\in\mathbb{N}$ and $\xi_1,\, b_1\in\FBR$, it follows that
\begin{align*}
& \langle b_1 \rangle ^{k_1}  |\mathcal S_\psi f((b_1,b_2),s,a)|\\
&=\langle b_1 \rangle ^{k_1}|a|^{\frac{3}{4}}|\int_{\FBR} \int_{\FBR}\mathcal{F} f(\xi_1,\xi_2)\frac{(1-\partial_{\xi_1}^2 )^{N} e^{2\pi i\xi_1 b_1} }{ \langle 2\pi b_1 \rangle ^{2N}}e^{2\pi i b_2\xi_2} \overline{\mathcal{F}\psi(a\xi_1,a|a|^{-\frac{1}{2}}(\xi_2-s\xi_1))}{\rm d}\xi_1{\rm d}\xi_2|\\
&=\frac{\langle b_1 \rangle ^{k_1}}{\langle 2\pi b_1 \rangle ^{2N}}|a|^{\frac{3}{4}}|\int_{\FBR} \int_{\FBR}(1-\partial_{\xi_1}^2 )^{N} (\mathcal{F} f(\xi_1,\xi_2)\overline{\mathcal{F}\psi(a\xi_1,a|a|^{-\frac{1}{2}}(\xi_2-s\xi_1))})e^{2\pi i b\xi} {\rm d}\xi_1{\rm d}\xi_2|\\
&=\frac{\langle b_1 \rangle ^{k_1}}{\langle 2\pi b_1 \rangle ^{2N}}|\sum_{j,r_1,r_2\leq2N}|a|^{\frac{3}{4}+r_1+\frac{r_2}{2}}\int_{\FBR} \int_{\FBR}P_{j,r_1,r_2}(s)\partial_1^{j}\mathcal{F} f(\xi_1,\xi_2)\times\\
&\times\partial_{1}^{r_1}\partial_{2}^{r_2}\overline{\mathcal{F}\psi(a\xi_1,a|a|^{-\frac{1}{2}}(\xi_2-s\xi_1))}e^{2\pi i b\xi}
{\rm d}\xi_1{\rm d}\xi_2|,
\end{align*}
for some polynomials $P_{j,r_1,r_2}(t)=\sum_{m=0}^{p_{j,r_1,r_2}}c_m t^m$, and
$b\xi = b_1\xi_1+b_2\xi_2$. Then, we obtain
\begin{align*}
& \langle b_1 \rangle ^{k_1}  |\mathcal S_\psi f((b_1,b_2),s,a)|\\
& \leq\frac{\langle b_1 \rangle ^{k_1}|a|^{N}\langle s\rangle^{2N}}{\langle 2\pi b_1 \rangle ^{2N}}|\sum_{j,r_1,r_2\leq2N}\sum_{m=0}^{p_{j,r_1,r_2}}c_m|a|^{\frac{3}{4}}\int_{\FBR} \int_{\FBR}\partial_1^{j}\mathcal{F} f(\xi_1,\xi_2)
\times \\
& \times\partial_{1}^{r_1}\partial_{2}^{r_2}\overline{\mathcal{F}\psi(a\xi_1,a|a|^{-\frac{1}{2}}(\xi_2-s\xi_1))}
e^{2\pi i b\xi} {\rm d}\xi_1{\rm d}\xi_2|,
\end{align*}
where the last inequality follows by the fact that we can assume $|a|\geq1$ and $|s|\geq1$.
The above estimate yields
\begin{align*}
& \langle b_1 \rangle ^{k_1}  |\mathcal S_\psi f((b_1,b_2),s,a)|
\leq\frac{\langle b_1 \rangle ^{k_1}}{\langle 2\pi b_1 \rangle ^{2N}}\sum_{j,r_1,r_2\leq2N}\sum_{m=0}^{p_{j,r_1,r_2}}|c_m|
\rho_{0,0,2N,N}^{0,0,0,0}(\mathcal{S}_{\psi_{r_1,r_2}}f_j),
\end{align*}
with $\mathcal{F}f_j(\xi_1,\xi_2)=\partial_1^{j}\mathcal{F} f(\xi_1,\xi_2)$ for every $j$ and
$\psi_{r_1,r_2}$ given by
$\mathcal{F}\psi_{r_1,r_2}(\xi_1,\xi_2)=\partial_{1}^{r_1}\partial_{2}^{r_2}\mathcal{F}\psi(\xi_1,\xi_2)$ for every pair $r_1,r_2 \in
\mathbb{N}$, $r_1,r_2 \leq 2N $.
Analogously, for every $M \in\FBN$ we have that
\begin{align*}
& \langle b_2 \rangle ^{k_2}  |\mathcal S_\psi f((b_1,b_2),s,a)|\\
%&=\langle b_2 \rangle ^{k_2}|a|^{\frac{3}{4}}|\int_{\FBR} \int_{\FBR}\mathcal{F} f(\xi_1,\xi_2)e^{2\pi i (b_1\xi_1+b_2\xi_2)} \overline{\mathcal{F}\psi(a\xi_1,|a|^{\frac{1}{2}}(\xi_2-s\xi_1))}{\rm d}\xi_1{\rm d}\xi_2|\\
&=\langle b_2 \rangle ^{k_2}|a|^{\frac{3}{4}}|\int_{\FBR} \int_{\FBR}\mathcal{F} f(\xi_1,\xi_2)\frac{(1-\partial_{\xi_2}^2 )^{M} e^{2\pi i\xi_2 b_2} }{ \langle 2\pi b_2 \rangle ^{2M}}e^{2\pi i b_1\xi_1} \overline{\mathcal{F}\psi(a\xi_1,a|a|^{-\frac{1}{2}}(\xi_2-s\xi_1))}{\rm d}\xi_1{\rm d}\xi_2|\\
&=\frac{\langle b_2 \rangle ^{k_2}}{\langle 2\pi b_2 \rangle ^{2M}}|a|^{\frac{3}{4}}|\int_{\FBR} \int_{\FBR}(1-\partial_{\xi_2}^2 )^{M} (\mathcal{F} f(\xi_1,\xi_2)\overline{\mathcal{F}\psi(a\xi_1,a|a|^{-\frac{1}{2}}(\xi_2-s\xi_1))})e^{2\pi i b\xi} {\rm d}\xi_1{\rm d}\xi_2|\\
&\lesssim\frac{\langle b_2 \rangle ^{k_2}}{\langle 2\pi b_2 \rangle ^{2M}}|\sum_{j,r\leq2M}|a|^{\frac{3}{4}+\frac{r}{2}}\int_{\FBR} \int_{\FBR}\partial_2^{j}\mathcal{F} f(\xi_1,\xi_2)\partial_{2}^{r}\overline{\mathcal{F}\psi(a\xi_1,a|a|^{-\frac{1}{2}}(\xi_2-s\xi_1))}e^{2\pi i b\xi} {\rm d}\xi_1{\rm d}\xi_2|\\
&\lesssim\frac{\langle b_2 \rangle ^{k_2}|a|^M}{\langle 2\pi b_2 \rangle ^{2M}}|\sum_{j,r\leq2M}|a|^{\frac{3}{4}}\int_{\FBR} \int_{\FBR}\partial_2^{j}\mathcal{F} f(\xi_1,\xi_2)\partial_{2}^{r}\overline{\mathcal{F}\psi(a\xi_1,a|a|^{-\frac{1}{2}}(\xi_2-s\xi_1))}e^{2\pi i b\xi} {\rm d}\xi_1{\rm d}\xi_2|
\end{align*}
where the last inequality follows by assuming $|a|\geq1$. Hence, we obtain
\begin{align*}
& \langle b_2 \rangle ^{k_2}  |\mathcal S_\psi f((b_1,b_2),s,a)|
\leq\frac{\langle b_2 \rangle ^{k_2}}{\langle 2\pi b_2 \rangle ^{2M}}\sum_{j,r\leq2M}
\rho_{0,0,0,M}^{0,0,0,0}(\mathcal{S}_{ \psi_r} \tilde f_j),
\end{align*}
with $ \tilde f_j$ given by $\mathcal{F} \tilde f_j(\xi_1,\xi_2)=\partial_2^{j}\mathcal{F} f(\xi_1,\xi_2)$,
and $\psi_r$ given by
$\mathcal{F} \psi_r (\xi_1,\xi_2)=\partial_{2}^{r}\mathcal{F}\psi(\xi_1,\xi_2)$ for every $j, r \in \FBN$, $ j,r \leq 2M$.

Therefore, recalling that we have assumed $|b_1|\geq1$ and $|b_2|\geq1$, if we choose $N=k_1$ and $M=k_2$, we obtain
\begin{align*}
&\langle b_1 \rangle ^{k_1} \langle b_2 \rangle ^{k_2}  |\mathcal S_\psi f((b_1,b_2),s,a)|^2
\lesssim\frac{\langle b_1 \rangle ^{k_1} \langle b_2 \rangle ^{k_2}}{\langle b_1 \rangle ^{2k_1} \langle b_2 \rangle ^{2k_2}}\times\\
&\times [\sum_{j,k_1,k_2\leq2N}\sum_{m=0}^{p_{j,k_1,k_2}}|c_m|
\rho_{0,0,2N,N}^{0,0,0,0}(\mathcal{S}_{\psi_{r_1,r_2}}f_j)]
[ \sum_{j,r\leq2M}
\rho_{0,0,0,M}^{0,0,0,0}(\mathcal{S}_{ \psi_r } \tilde f_j)]\\
&\lesssim  [\sum_{j,k_1,k_2\leq2N}\sum_{m=0}^{p_{j,k_1,k_2}}|c_m|
\rho_{0,0,2N,N}^{0,0,0,0}(\mathcal{S}_{\psi_{r_1,r_2}}f_j)]
[ \sum_{j,r\leq2M}
\rho_{0,0,0,M}^{0,0,0,0}(\mathcal{S}_{ \psi_r } \tilde f_j)].
\end{align*}
and we conclude that we can assume $k_1=k_2=0$ 	since differentiation and multiplication by polynomials are continuous operators from $\mathcal{S}_{0} (\mathbb{R}^2)$ into itself.
\par
\textbf{3.} The third step consists in showing that when considering multiplications by powers of $s$, we can assume $l=0$ in \eqref{eq:normcontinuity}.
By formula \eqref{eq:shearlettransformfrequency} and
\[
\langle s \rangle \leq \langle s\xi_1 \rangle \left(\frac{1+\xi_1^2}{\xi_1^2}\right),\qquad s,\, \xi_1\in\FBR,
\]
it follows that
\begin{align*}
& \langle s \rangle ^{l}  |\mathcal S_\psi f((b_1,b_2),s,a)|\\
&=|a|^{\frac{3}{4}}|\int_{\FBR} \int_{\FBR} \mathcal{F} f(\xi_1,\xi_2)e^{2\pi i (b_1\xi_1+b_2\xi_2)}\langle s \rangle ^{l} \overline{\mathcal{F}\psi(a\xi_1,a|a|^{-\frac{1}{2}}(\xi_2-s\xi_1))}{\rm d}\xi_1{\rm d}\xi_2|\\
&\leq|a|^{\frac{3}{4}}|\int_{\FBR} \int_{\FBR} \xi_1^{-2l}\langle\xi_1\rangle^{2l}\mathcal{F} f(\xi_1,\xi_2)e^{2\pi i (b_1\xi_1+b_2\xi_2)}\langle s\xi_1 \rangle ^{l} \overline{\mathcal{F}\psi(a\xi_1,a|a|^{-\frac{1}{2}}(\xi_2-s\xi_1))}{\rm d}\xi_1{\rm d}\xi_2|.
\end{align*}
By Lemma \ref{FBLm:mainlm} there exists $f_{l}\in   \mathcal{S}_0 (\mathbb{R}^2)  $ such that
\[
\FBcF f(\xi_1,\xi_2) =\xi_1^{2l} \FBcF f_{l}(\xi_1,\xi_2),
\]
for every $(\xi_1,\xi_2)\in\FBR^2$, and then
\begin{align*}
& \langle s \rangle ^{l}  |\mathcal S_\psi f((b_1,b_2),s,a)|\\
&\leq|a|^{\frac{3}{4}}|\int_{\FBR} \int_{\FBR} \langle\xi_1\rangle^{2l}\mathcal{F} f_l(\xi_1,\xi_2)e^{2\pi i (b_1\xi_1+b_2\xi_2)}\langle s\xi_1 \rangle ^{l} \overline{\mathcal{F}\psi(a\xi_1,a|a|^{-\frac{1}{2}}(\xi_2-s\xi_1))}{\rm d}\xi_1{\rm d}\xi_2|.
\end{align*}

Without loss of generality, we may assume that $l$ is even. We divide the proof in the two cases $|a|\leq1$ and $|a|>1$.
 If $|a|\geq1$, by Peetre's inequality we have that
\begin{equation*}
 \langle s\xi_1 \rangle ^{l}\leq  \langle a|a|^{-\frac{1}{2}}s\xi_1 \rangle ^{l}\leq2^l\langle a|a|^{-\frac{1}{2}}(\xi_2-s\xi_1)\rangle^l \langle a|a|^{-\frac{1}{2}}\xi_2 \rangle ^{l},\qquad s,\, \xi_1,\, \xi_2\in\FBR.
\end{equation*}
Then
\begin{align*}
& \langle s \rangle ^{l}  |\mathcal S_\psi f((b_1,b_2),s,a)|\\
%&\leq |a|^{\frac{3}{4}}|\int_{\FBR} \int_{\FBR} \mathcal{F} f(\xi_1,\xi_2)e^{2\pi i (b_1\xi_1+b_2\xi_2)}\langle s \rangle ^{l} \overline{\mathcal{F}\psi(a\xi_1,|a|^{\frac{1}{2}}(\xi_2-s\xi_1))}{\rm d}\xi_1{\rm d}\xi_2|\\
&\leq|a|^{\frac{3}{4}}|\int_{\FBR} \int_{\FBR} 2^l\langle\xi_1\rangle^{2l}\langle a|a|^{-\frac{1}{2}}\xi_2 \rangle ^{l}\mathcal{F} f_l(\xi_1,\xi_2)e^{2\pi i (b_1\xi_1+b_2\xi_2)}\times\\
&\times\langle a|a|^{-\frac{1}{2}}(\xi_2-s\xi_1)\rangle^l \overline{\mathcal{F}\psi(a\xi_1,a|a|^{-\frac{1}{2}}(\xi_2-s\xi_1))}{\rm d}\xi_1{\rm d}\xi_2|,
%&\lesssim\sum_{m=0}^{p_l}c_m|a|^{\frac{3}{4}+\frac{m}{2}}|\int_{\FBR} \int_{|\xi_1|\geq|a|^{-\frac{1}{2}}} \xi_2^m \mathcal{F} f(\xi_1,\xi_2)e^{2\pi i (b_1\xi_1+b_2\xi_2)}\times\\
%&\times\langle|a|^\frac{1}{2}(\xi_2-s\xi_1)\rangle^l \overline{\mathcal{F}\psi(a\xi_1,|a|^{\frac{1}{2}}(\xi_2-s\xi_1))}{\rm d}\xi_1{\rm d}\xi_2|\\
\end{align*}
which is less than or equal to a finite sum of terms of the form
\begin{align*}
%& \langle s \rangle ^{l}  |\mathcal S_\psi f((b_1,b_2),s,a)|\\
%&\leq |a|^{\frac{3}{4}}|\int_{\FBR} \int_{\FBR} \mathcal{F} f(\xi_1,\xi_2)e^{2\pi i (b_1\xi_1+b_2\xi_2)}\langle s \rangle ^{l} \overline{\mathcal{F}\psi(a\xi_1,|a|^{\frac{1}{2}}(\xi_2-s\xi_1))}{\rm d}\xi_1{\rm d}\xi_2|\\
&|a|^{\frac{3}{4}+\frac{r_2}{2}}|\int_{\FBR} \int_{\FBR} 2^l\xi_1^{r_1}\xi_2^{r_2}\mathcal{F} f_l(\xi_1,\xi_2)e^{2\pi i (b_1\xi_1+b_2\xi_2)}\times\\
&\times\langle a|a|^{-\frac{1}{2}}(\xi_2-s\xi_1)\rangle^l \overline{\mathcal{F}\psi(a\xi_1,a|a|^{-\frac{1}{2}}(\xi_2-s\xi_1))}{\rm d}\xi_1{\rm d}\xi_2|
%&\lesssim\sum_{m=0}^{p_l}c_m|a|^{\frac{3}{4}+\frac{m}{2}}|\int_{\FBR} \int_{|\xi_1|\geq|a|^{-\frac{1}{2}}} \xi_2^m \mathcal{F} f(\xi_1,\xi_2)e^{2\pi i (b_1\xi_1+b_2\xi_2)}\times\\
%&\times\langle|a|^\frac{1}{2}(\xi_2-s\xi_1)\rangle^l \overline{\mathcal{F}\psi(a\xi_1,|a|^{\frac{1}{2}}(\xi_2-s\xi_1))}{\rm d}\xi_1{\rm d}\xi_2|\\
\lesssim \rho_{0,0,0,\frac{r_2}{2}}^{0,0,0,0}(\mathcal{S}_{\tilde{\psi}}(\partial_1^{r_1}\partial_2^{r_2}f_l)),
\end{align*}
where $ 0 \leq  r_1,\, r_2 \leq 3l$ and with $\tilde \psi$ given by $\mathcal{F}\tilde{\psi}(\xi_1,\xi_2)=\langle\xi_2\rangle^{l}\FBcF\psi(\xi_1,\xi_2)$.
%Therefore, by Lemma \ref{FBLm:mainlm} and since differentiation and multiplication by polynomials are continuous operators from $\mathcal{S}_{0} (\mathbb{R}^2)$ into itself, we can assume $\alpha_1=\alpha_2=\beta=\gamma=0$.
Analogously, if $|a|\leq1$, by Peetre's inequality we have
\begin{equation*}
\langle s\xi_1 \rangle ^{l}\leq|a|^{-\frac{l}{2}} \langle a|a|^{-\frac{1}{2}}s\xi_1 \rangle ^{l}\leq |a|^{-\frac{l}{2}} 2^l\langle a|a|^{-\frac{1}{2}}(\xi_2-s\xi_1)\rangle^l \langle a|a|^{-\frac{1}{2}}\xi_2 \rangle ^{l},\qquad s,\, \xi_1,\, \xi_2\in\FBR,
\end{equation*}
and then
\begin{align*}
& \langle s \rangle ^{l}  |\mathcal S_\psi f((b_1,b_2),s,a)|\\
%&\leq |a|^{\frac{3}{4}}|\int_{\FBR} \int_{\FBR} \mathcal{F} f(\xi_1,\xi_2)e^{2\pi i (b_1\xi_1+b_2\xi_2)}\langle s \rangle ^{l} \overline{\mathcal{F}\psi(a\xi_1,|a|^{\frac{1}{2}}(\xi_2-s\xi_1))}{\rm d}\xi_1{\rm d}\xi_2|\\
&\leq|a|^{\frac{3}{4}-\frac{l}{2}}|\int_{\FBR} \int_{\FBR} 2^l\langle\xi_1\rangle^{2l}\langle a|a|^{-\frac{1}{2}}\xi_2 \rangle ^{l}\mathcal{F} f_l(\xi_1,\xi_2)e^{2\pi i (b_1\xi_1+b_2\xi_2)}\times\\
&\times\langle a|a|^{-\frac{1}{2}}(\xi_2-s\xi_1)\rangle^l \overline{\mathcal{F}\psi(a\xi_1,a|a|^{-\frac{1}{2}}(\xi_2-s\xi_1))}{\rm d}\xi_1{\rm d}\xi_2|,
%&\lesssim\sum_{m=0}^{p_l}c_m|a|^{\frac{3}{4}+\frac{m}{2}}|\int_{\FBR} \int_{|\xi_1|\geq|a|^{-\frac{1}{2}}} \xi_2^m \mathcal{F} f(\xi_1,\xi_2)e^{2\pi i (b_1\xi_1+b_2\xi_2)}\times\\
%&\times\langle|a|^\frac{1}{2}(\xi_2-s\xi_1)\rangle^l \overline{\mathcal{F}\psi(a\xi_1,|a|^{\frac{1}{2}}(\xi_2-s\xi_1))}{\rm d}\xi_1{\rm d}\xi_2|\\
\end{align*}
which is less than or equal to a finite sum of terms of the form
\begin{align*}
%& \langle s \rangle ^{l}  |\mathcal S_\psi f((b_1,b_2),s,a)|\\
%&\leq |a|^{\frac{3}{4}}|\int_{\FBR} \int_{\FBR} \mathcal{F} f(\xi_1,\xi_2)e^{2\pi i (b_1\xi_1+b_2\xi_2)}\langle s \rangle ^{l} \overline{\mathcal{F}\psi(a\xi_1,|a|^{\frac{1}{2}}(\xi_2-s\xi_1))}{\rm d}\xi_1{\rm d}\xi_2|\\
&|a|^{\frac{3}{4}-\frac{l}{2}+\frac{r_4}{2}}|\int_{\FBR} \int_{\FBR} 2^l\xi_1^{r_3}\xi_2^{r_4}\mathcal{F} f_l(\xi_1,\xi_2)e^{2\pi i (b_1\xi_1+b_2\xi_2)}\times\\
&\times\langle a|a|^{-\frac{1}{2}}(\xi_2-s\xi_1)\rangle^l \overline{\mathcal{F}\psi(a\xi_1,a|a|^{-\frac{1}{2}}(\xi_2-s\xi_1))}{\rm d}\xi_1{\rm d}\xi_2|
%&\lesssim\sum_{m=0}^{p_l}c_m|a|^{\frac{3}{4}+\frac{m}{2}}|\int_{\FBR} \int_{|\xi_1|\geq|a|^{-\frac{1}{2}}} \xi_2^m \mathcal{F} f(\xi_1,\xi_2)e^{2\pi i (b_1\xi_1+b_2\xi_2)}\times\\
%&\times\langle|a|^\frac{1}{2}(\xi_2-s\xi_1)\rangle^l \overline{\mathcal{F}\psi(a\xi_1,|a|^{\frac{1}{2}}(\xi_2-s\xi_1))}{\rm d}\xi_1{\rm d}\xi_2|\\
\lesssim \rho_{0,0,0,|\frac{r_4}{2}-\frac{l}{2}|}^{0,0,0,0}(\mathcal{S}_{\tilde{\psi}}(\partial_1^{r_3}\partial_2^{r_4}f_l)),
\end{align*}
where $ 0 \leq r_3,\, r_4  \leq 3l$.
Therefore, by Lemma \ref{FBLm:mainlm}, and since differentiation and multiplication by polynomials are continuous operators from $\mathcal{S}_{0} (\mathbb{R}^2)$ into itself, we can assume $l=0$.

\par

\textbf{4.} Finally, we consider multiplication by positive and negative powers of $|a|,\, a \in\mathbb{R}^{\times}$. Let $m\in\mathbb{N}$.
%\begin{align*}
%&|a|^m |\mathcal S_\psi f((b_1,b_2),s,a)|\\
%&=|a|^{\frac{3}{4}+m}|\int_{\FBR}\int_{\FBR}\FBcF f(\xi_1,\xi_2)e^{2\pi i (b_1\xi_1+b_2\xi_2)}\overline{\FBcF\psi(a\xi_1,|a|^{\frac{1}{2}}(\xi_2-s\xi_1))}\D\xi_1\D\xi_2|\\
%\end{align*}
By Lemma \ref{FBLm:mainlm} there exists $g_m \in   \mathcal{S}_{0} (\mathbb{R}^2) $ such that
\[
\FBcF f (\xi_1, \xi_2) = \xi_1 ^m \FBcF g_m (\xi_1, \xi_2),
\]
for every $(\xi_1,\xi_2)\in\FBR^2$. Then, assuming $|a|\geq1$, we have
\begin{align*}
&|a|^m |\mathcal S_\psi f((b_1,b_2),s,a)|\\
&=|a|^{\frac{3}{4}}|\int_{\FBR}\int_{\FBR}\FBcF g_m(\xi_1,\xi_2)e^{2\pi i (b_1\xi_1+b_2\xi_2)}a^m\xi_1^m\overline{\FBcF\psi(a\xi_1,a|a|^{-\frac{1}{2}}(\xi_2-s\xi_1))}\D\xi_1\D\xi_2|\\
&\lesssim |a|^{\frac{3}{4}}\int_{\FBR}\int_{\FBR}|\FBcF g_m(\xi_1,\xi_2)||\FBcF(\partial_{1}^m\psi(a\xi_1,a|a|^{-\frac{1}{2}}(\xi_2-s\xi_1)))|\D\xi_1\D\xi_2\\
&\lesssim \|g_m\|_1|a|^{\frac{3}{4}}\int_{\FBR}\int_{\FBR}|\FBcF(\partial_{1}^m\psi(a\xi_1,a|a|^{-\frac{1}{2}}(\xi_2-s\xi_1)))|\D\xi_1\D\xi_2\\
&\lesssim \|g_m\|_1|a|^{-\frac{3}{4}}\int_{\FBR}\int_{\FBR}|\FBcF(\partial_{1}^m\psi(\xi_1,\xi_2))|\D\xi_1\D\xi_2\leq C \|g_m\|_1|a|^{-\frac{3}{4}}\|\FBcF\partial_{1}^m\psi\|_1\\
&\lesssim \rho_2(g_m)\rho_{2}(\FBcF\partial_{1}^m\psi).
\end{align*}

Therefore, by Lemma~\ref{FBLm:mainlm}, and since the Fourier transform and the differentiation are continuous operators from $\mathcal{S}_{0} (\mathbb{R}^2)$ into itself,  we conclude that
there exist $\nu_1,\, \nu_2\in\mathbb{N}$
such that
\begin{align}\label{eq:inequalitypositivepowera}
|a|^m |\mathcal S_\psi f((b_1,b_2),s,a)|\lesssim \rho_{\nu_1}(f)\rho_{\nu_2}(\psi)
\end{align}
for every $(b_1,b_2)\in\FBR^2,\, s\in\FBR$ and $|a|\geq1$.
\par
We finish the proof by considering multiplications by negative powers of $|a|$. Let $m\in\mathbb{N}$.
%\begin{align*}
%&|a^{-m} \mathcal S_\psi f((b_1,b_2),s,a)|=|a^{\frac{3}{4}-m}\int_{\FBR}\int_{\FBR}\FBcF f(\xi_1,\xi_2)e^{2\pi i (b_1\xi_1+b_2\xi_2)}\overline{\FBcF\psi(a\xi_1,a^{\frac{1}{2}}(\xi_2-s\xi_1))}\D\xi_1\D\xi_2|
%\end{align*}
By Lemma \ref{FBLm:mainlm} there exists $\psi_{m}\in   \mathcal{S}_0 (\mathbb{R}^2)  $ such that
\[
\FBcF \psi(\xi_1,\xi_2) =\xi_1^{m+1} \FBcF \psi_{m}(\xi_1,\xi_2),
\]
for every $(\xi_1,\xi_2)\in\FBR^2$. Assuming $|a|\leq1$, we compute
\begin{align*}
&|a|^{-m} |\mathcal S_\psi f((b_1,b_2),s,a)|\\
&=|a|^{\frac{7}{4}}|\int_{\FBR}\int_{\FBR}\xi_1^{m+1}\FBcF f(\xi_1,\xi_2)e^{2\pi i (b_1\xi_1+b_2\xi_2)}\overline{\FBcF\psi_{m}(a\xi_1,a|a|^{-\frac{1}{2}}(\xi_2-s\xi_1))}\D\xi_1\D\xi_2|\\
%&=C|a|^{\frac{3}{4}}|\int_{\FBR}\int_{\FBR}\FBcF (\partial_{\xi_1}^{m+1}f)(\xi_1,\xi_2)e^{2\pi i (b_1\xi_1+b_2\xi_2)}\overline{\FBcF\psi_{m+1}(a\xi_1,a^{\frac{1}{2}}(\xi_2-s\xi_1))}\D\xi_1\D\xi_2|\\
&\lesssim |a|^{\frac{7}{4}}\int_{\FBR}\int_{\FBR}|\FBcF (\partial_{1}^{m+1}f)(\xi_1,\xi_2)||\FBcF\psi_{m}(a\xi_1,a|a|^{-\frac{1}{2}}(\xi_2-s\xi_1))|\D\xi_1\D\xi_2\\
&\lesssim |a|^{\frac{7}{4}}\|\partial_{1}^{m+1}f\|_1 \int_{\FBR}\int_{\FBR} |\FBcF\psi_{m}(a\xi_1,a|a|^{-\frac{1}{2}}(\xi_2-s\xi_1))|\D\xi_1\D\xi_2\\
&\lesssim|a|^{\frac{1}{4}}\|\partial_{1}^{m+1}f\|_1\|\FBcF\psi_{m}\|_1
\lesssim\rho_{{m+1}+2}(f)\rho_{2}(\FBcF\psi_{m}).
\end{align*}
By Lemma~\ref{FBLm:mainlm}, and since the Fourier transform and the differentiation are continuous operators from $\mathcal{S}_{0} (\mathbb{R}^2)$ into itself, we obtain
\begin{align}\label{eq:inequalitynegativepowera}
|a|^{-m} |\mathcal S_\psi f((b_1,b_2),s,a)|\lesssim \rho_{\nu_1}(f)\rho_{\nu_2}(\psi)
\end{align}
for some $\nu_1,\, \nu_2\in\mathbb{N}$ and for every $(b_1,b_2)\in\FBR^2,\, s\in\FBR$ and $|a|\leq1$.
\par
Equation~\eqref{eq:inequalitypositivepowera} together with \eqref{eq:inequalitynegativepowera} show that for every $f, \psi\in \mathcal{S}_{0} (\mathbb{R}^2)$, given $m\in \mathbb{N}$, there exist $ \nu_1,\nu_2 \in \FBN$ such that
\begin{equation*}
\rho_{0,0,0,m} ^{0,0,0,0} ( \mathcal S_\psi f) \lesssim \rho_{\nu_1} (f )\rho_{\nu_2} (\psi)
\end{equation*}
and this concludes the proof.
\end{proof}

Theorem  \ref{thm:continuityshearlettransform} shows how the regularity and decay properties in the range of the shearlet transform are controlled by the corresponding properties of both the shearlet  $\psi $ and the analyzed signal $f.$

\section{The Shearlet Synthesis Operator}%%%%%%%%%%%%%%%%%%%%%%%%%%%%%%THESHEARLESYNTHESISOPERATOR
\label{FBsec:4}

Reconstruction formula~\eqref{FBreconstructionformulashearlet}  suggests to define a linear operator which maps functions over $\mathbb{S}$ to functions over the Euclidean plane $\FBR^2$. The operator that we are after is
analogous to the wavelet synthesis operator considered by Holschneider \cite{FBhol1995}.
\begin{definition}\label{defn:synthesisoperator}
Let $\psi\in L^1(\FBR^2)$ and let $F\in \FBcS(\mathbb{S})$. We
define the shearlet synthesis operator $\FBcS_{\psi}^tF$ of $F$ with respect to $\psi$ as the function
\begin{align}\label{FBsynthesisoperator}
&\FBcS_{\psi}^t F(x)=\int_{\FBR^\times}\int_{\FBR}\int_{\FBR^2} F(b,s,a)\,\, \pi_{b,s,a}\psi(x)
\,\,\frac{\D b \D s  \D a}{|a|^3},\qquad x\in\FBR^2.
\end{align}
\end{definition}

It is immediate to verify that for every $F\in \FBcS(\mathbb{S})$ the integral in \eqref{FBsynthesisoperator} is absolutely convergent for all $x\in\FBR^2$. Moreover, $\FBcS ^t: (F, \psi) \mapsto \FBcS_{\psi}^t F$ is a continuous bilinear mapping from
$  \FBcS(\mathbb{S})  \times L^1 (\FBR^2)$ into $ L^\infty(\mathbb R^2)$, which is proved in what follows.
Let $F\in \FBcS(\mathbb{S})$, $x\in\FBR^2$, and $\epsilon>0$. Indeed, by the Fubini theorem and the definition of the shearlet representation \eqref{eq:shearletrappr} it follows that
\begin{align*}
&|\FBcS_{\psi}^t F(x)|\leq\int_{|a|<\epsilon}\int_{\FBR}\int_{\FBR^2} |a|^{-\frac{3}{4}} |F(b,s,a)|\,\, |\psi(A_a^{-1}N_s^{-1}(x-b))|
\,\,\frac{\D b \D s  \D a}{|a|^3}\\
&+\int_{|a|>\epsilon}\int_{\FBR}\int_{\FBR^2} |a|^{-\frac{3}{4}} |F(b,s,a)|\,\, |\psi(A_a^{-1}N_s^{-1}(x-b))|
\,\,\frac{\D b \D s  \D a}{|a|^3}\\
&\leq\rho_{0,0,2,\frac{9}{4}}^{0,0,0,0}(F)\int_{|a|<\epsilon}\int_{\FBR}\int_{\FBR^2} |a|^{-\frac{3}{2}} |\psi(A_a^{-1}N_s^{-1}(x-b))|
\,\,\frac{\D b \D s  \D a}{(1+s^2)}\\
&+\rho_{0,0,2,0}^{0,0,0,0}(F)\int_{|a|>\epsilon}|a|^{-\frac{9}{4}}\int_{\FBR}\int_{\FBR^2} |a|^{-\frac{3}{2}} |\psi(A_a^{-1}N_s^{-1}(x-b))|
\,\,\frac{\D b \D s  \D a}{(1+s^2)}\\
&\leq\rho_{0,0,2,\frac{9}{4}}^{0,0,0,0}(F)\|\psi\|_1\int_{|a|<\epsilon}\int_{\FBR}
\,\,\frac{ \D s  \D a}{(1+s^2)}
+\rho_{0,0,2,0}^{0,0,0,0}(F)\|\psi\|_1\int_{|a|>\epsilon}|a|^{-\frac{9}{4}}\int_{\FBR}
\,\,\frac{\D s  \D a}{(1+s^2)}\\
&\lesssim\|\psi\|_1(\rho_{0,0,2,\frac{9}{4}}^{0,0,0,0}(F)+\rho_{0,0,2,0}^{0,0,0,0}(F))<+\infty.
\end{align*}

The terminology shearlet synthesis operator follows by an analogy with the frame theory. Indeed, an admissible
shearlet $\psi$  for the shearlet representation~\eqref{FBeqn:admvect} gives rise to
the continuous frame $\{\pi_{b,s,a}\psi\}_{(b,s,a)\in\mathbb{S}}$, see e.g. \cite{FBgr11},
and the frame synthesis operator associated to such a frame is actually the shearlet synthesis operator $\mathcal{S}^t_\psi$ given by
\eqref{FBsynthesisoperator}.

\par

We consider the shearlet synthesis operator as the bilinear map $\mathcal S^t\colon (F, \psi) \mapsto \mathcal S^t_\psi F$
for a wide class of pairs $(F, \psi)$ for which Definition~\ref{defn:synthesisoperator} makes sense,
and prove its continuity from $\mathcal{S}(\mathbb{S})\times\mathcal{S}_0(\FBR^2)$ into $\mathcal{S}_0 (\mathbb{R}^2)$.
We refer to \cite[Theorem 19.0.1]{FBhol1995} for the analogous statement for the wavelet synthesis operator.

\begin{theorem}
The operator $\mathcal S^t\colon (F, \psi) \mapsto \mathcal S_\psi F$ is a continuous bilinear mapping from $\mathcal{S}(\mathbb{S})\times\mathcal{S}_0(\FBR^2)$ into $\mathcal{S}_0 (\mathbb{R}^2)$.
In particular, the shearlet synthesis operator $S^t_\psi$ is a continuous operator from $\mathcal{S}(\mathbb{S})$ into $\mathcal{S}_0 (\mathbb{R}^2)$ for every $\psi\in\mathcal{S}_0 (\mathbb{R}^2)$.
 \label{FBthm:continuitysynthesisoperator}
\end{theorem}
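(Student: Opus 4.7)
The plan is to verify the two characterizing properties of $\mathcal{S}_0(\FBR^2)$---Schwartz decay together with all derivatives, and vanishing of all moments---directly from the integral representation~\eqref{FBsynthesisoperator}, while simultaneously extracting linear dependence on one seminorm of $F$ and one of $\psi$ so as to obtain bilinear continuity. The starting observation is that the integrand can be controlled pointwise in $(b,s,a,x)$ by a Schwartz seminorm of $\psi$ times a weight that is polynomial in $\langle b\rangle$, $\langle s\rangle$, $|a|$ and $|a|^{-1}$, after which the $(b,s,a)$-integration is absorbed by the decay of $F$ encoded in the norms $p^{\alpha_1,\alpha_2,\beta,\gamma}_{k_1,k_2,l,m}$.

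First I would estimate $\rho_\nu(\pi_{b,s,a}\psi)$ uniformly in $(b,s,a)\in\mathbb{S}$. Since $\pi_{b,s,a}\psi(x)=|a|^{-3/4}\psi(A_a^{-1}N_s^{-1}(x-b))$ and the entries of $A_a^{-1}N_s^{-1}$ are bounded by $|a|^{-1}\langle s\rangle(1+|a|^{1/2})$, the chain rule gives
\[
|\partial^{m}_x\pi_{b,s,a}\psi(x)|\lesssim |a|^{-3/4}\bigl(|a|^{-1}+|a|^{-1/2}\bigr)^{|m|}\langle s\rangle^{|m|}\sum_{|k|\le|m|}\bigl|\psi^{(k)}(A_a^{-1}N_s^{-1}(x-b))\bigr|.
\]
Setting $y=A_a^{-1}N_s^{-1}(x-b)$ and using $\langle x\rangle\le C\langle b\rangle\langle s\rangle(\langle a\rangle+\langle a\rangle^{1/2})\langle y\rangle$, the factor $\langle y\rangle^\nu$ is absorbed into $\rho_\nu(\psi)$, yielding
\[
\langle x\rangle^\nu|\partial^{m}_x\pi_{b,s,a}\psi(x)|\lesssim \rho_\nu(\psi)\,\langle b\rangle^\nu\langle s\rangle^{2\nu}\bigl(|a|^{N}+|a|^{-N}\bigr)
\]
for every $|m|\le\nu$ and some $N=N(\nu)\in\FBN$. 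Inserting this into~\eqref{FBsynthesisoperator} and justifying the interchange of differentiation and integration by dominated convergence, I obtain
\[
\rho_\nu(\mathcal{S}^t_\psi F)\lesssim \rho_\nu(\psi)\int_{\mathbb{S}}|F(b,s,a)|\,\langle b\rangle^\nu\langle s\rangle^{2\nu}\bigl(|a|^{N}+|a|^{-N}\bigr)\frac{\D b\,\D s\,\D a}{|a|^3},
\]
which, with seminorm indices of $F$ taken sufficiently large, is dominated by $\rho_\nu(\psi)\cdot p^{0,0,0,0}_{\nu+3,\nu+3,2\nu+2,N+3}(F)$; this produces the continuity of $\mathcal{S}^t\colon(F,\psi)\mapsto \mathcal{S}^t_\psi F$ into $\mathcal{S}(\FBR^2)$.

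To upgrade the codomain from $\mathcal{S}(\FBR^2)$ to $\mathcal{S}_0(\FBR^2)$, I would remark that each $\pi_{b,s,a}\psi$ lies in $\mathcal{S}_0(\FBR^2)$: its Fourier transform $|a|^{3/4}e^{-2\pi i b\xi}\mathcal{F}\psi(A_a{^t\!N_s}\xi)$ vanishes to infinite order at $\xi=0$ because $\mathcal{F}\psi\in\hat{\mathcal{S}}_0(\FBR^2)$ and $A_a{^t\!N_s}$ sends $0$ to $0$, so by Lemma~\ref{FBLm:stability} the function $\pi_{b,s,a}\psi$ lies in $\mathcal{S}_0(\FBR^2)$. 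Consequently $\int_{\FBR^2}x^m\pi_{b,s,a}\psi(x)\,\D x=0$ for every $m\in\FBN^2$ and every $(b,s,a)\in\mathbb{S}$; combined with a Fubini argument licensed by the preceding pointwise estimate applied with $\nu=|m|$ (and $|x^m|\le\langle x\rangle^{|m|}$), this transfers the vanishing of moments to $\mathcal{S}^t_\psi F$ and yields $\mathcal{S}^t_\psi F\in\mathcal{S}_0(\FBR^2)$.

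The main obstacle, as in the proof of Theorem~\ref{thm:continuityshearlettransform}, is the simultaneous handling of the two opposite asymptotic regimes $|a|\to\infty$ and $|a|\to 0$: each $x$-derivative of $\pi_{b,s,a}\psi$ generates negative powers of $|a|$ that compound the already singular Haar density $|a|^{-3}$ near $a=0$ and must be absorbed by the $|a|^{-m}$-side of the norm $p^{\alpha_1,\alpha_2,\beta,\gamma}_{k_1,k_2,l,m}(F)$, while large-$|a|$ growth is tamed by the $|a|^{m}$-side of the same norm. The two-sided weight $|a|^m+|a|^{-m}$ built into $\mathcal{S}(\mathbb{S})$ is tailor-made for this trade-off, so with careful bookkeeping both integrals converge and the bilinear continuity estimate closes.
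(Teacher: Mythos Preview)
Your argument is correct and takes a genuinely different route from the paper's proof. The paper works on the Fourier side: it first derives the representation
\[
\FBcF\FBcS^t_\psi F(\xi)=\int_{\FBR^\times}\int_{\FBR}|a|^{3/4}\FBcF F(\xi,s,a)\,\overline{\FBcF\psi(-A_a{^t\!N_s}\xi)}\,\frac{\D s\,\D a}{|a|^3},
\]
then estimates $\partial_{x_j}^\alpha$ by pulling down $\xi_j^\alpha$, handles $x_j^k$ via integration by parts in $\xi_j$ and the Leibniz rule, and in each case splits the $a$-integral at a fixed $\epsilon$. For the vanishing moments it invokes Lemma~\ref{FBLm:mainlm} to factor $\FBcF\psi(\xi)=\xi_1^m\FBcF g(\xi)$ and verifies the criterion $\lim_{\xi\to 0}|\xi|^{-k}\FBcF\FBcS^t_\psi F(\xi)=0$. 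Your approach stays entirely in the spatial domain: a single estimate of $\rho_\nu(\pi_{b,s,a}\psi)$ by a polynomial weight in $(b,s,a)$ covers both derivatives and powers of $x$ simultaneously, and the $\FBcS_0$ membership is obtained by Fubini from the moment vanishing of each individual $\pi_{b,s,a}\psi$. This is more elementary---no Fourier inversion, no integration by parts, no appeal to Lemma~\ref{FBLm:mainlm}---and makes the role of the two-sided weight $|a|^m+|a|^{-m}$ in the definition of $\FBcS(\mathbb{S})$ very transparent. The paper's route, on the other hand, keeps the argument parallel to the proof of Theorem~\ref{thm:continuityshearlettransform} and tracks the seminorms of $\FBcF F$ and $\FBcF\psi$ somewhat more precisely.

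One small correction: in your Fubini step for the moments you invoke the pointwise bound ``with $\nu=|m|$''. That only gives $\langle x\rangle^{|m|}|\pi_{b,s,a}\psi(x)|\lesssim C(b,s,a)$, i.e.\ a constant in $x$, which is not integrable over $\FBR^2$. You need $\nu\ge|m|+3$ so that $|x^m\pi_{b,s,a}\psi(x)|\lesssim \langle x\rangle^{-3}C(b,s,a)$ becomes $x$-integrable; after that the $(b,s,a)$-integration goes through exactly as in your continuity estimate.
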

\begin{proof}
We start by proving the continuity, and we refer to \cite{FBbarpilnen} for the analogous proof under stronger conditions on the admissible vectors.
Indeed, compared to \cite{FBbarpilnen}, here we consider general admissible shearlets $\psi\in\mathcal{S}_0 (\mathbb{R}^2)$.

We need to show that for every $F\in\FBcS(\mathbb{S})$, $\psi\in\mathcal{S}_0 (\mathbb{R}^2)$ and $\nu_1 \in \mathbb{N}$, there exist $k_1,k_2,l,m,\alpha_1,\alpha_2,\beta,\gamma,\nu_2\in \FBN$ such that
\begin{equation*}
\rho_{\nu_1}(\FBcS^t_{\psi}F)\leq C \rho_{k_1,k_2,l,m} ^{\alpha_1,\alpha_2, \beta,\gamma} (F)\rho_{\nu_2}(\psi).
\end{equation*}

We use the fact that the families $\hat{\rho}_{\nu}(f)=\rho_{\nu}(\FBcF f)$
and $\hat\rho_{k_1,k_2,l,m} ^{\alpha_1,\alpha_2, \beta,\gamma}(F)=\rho_{k_1,k_2,l,m} ^{\alpha_1,\alpha_2, \beta,\gamma}(\FBcF F)$,
where $\FBcF F$ denotes the Fourier transform of $F$ with respect to the variable $b$, are bases of norms for the topologies of $\FBcS_0(\FBR^2)$ and $\mathcal{S}(\mathbb {S})$, respectively (cf. \cite{FBkpsv2014}). Furthermore, by the Plancherel theorem, equation \eqref{FBshearfreq} and the Fubini theorem we have that
$$
\FBcS_{\psi}^t F(x)
=\int_{\FBR^{\times}}\int_{\FBR} |a|^{3/4}\int_{\FBR^2} \FBcF F(\mathbf{\xi},s,a)\,\, e^{2\pi i x\xi} \overline{\FBcF \psi(-a\xi_1,a|a|^{-\frac{1}{2}}(-\xi_2+s\xi_1))}
\,\,\D \xi\frac{ \D s \D a }{|a|^3}
$$
\begin{equation}
\label{FBfrequencysynthesisoperator}
=\int_{\FBR^2}e^{2\pi i x\xi} \int_{\FBR^{\times}}\int_{\FBR} |a|^{3/4} \FBcF F(\mathbf{\xi},s,a)\,\, \overline{\FBcF \psi(-a\xi_1,a|a|^{-\frac{1}{2}}(-\xi_2+s\xi_1))}
\,\,\frac{ \D s \D a }{|a|^3}\D \xi,
\end{equation}
for every $F\in\FBcS(\mathbb{S})$ and $\psi\in\mathcal{S}_0 (\mathbb{R}^2)$.

\par

We first consider the derivatives of $ \FBcS^t_{\psi}F.$ Let  $\epsilon>0$ and $N\in\mathbb{N}$, $N>2$. By formula \eqref{FBfrequencysynthesisoperator} for every $\alpha\in  \mathbb{N}$ we have that
\begin{align*}
&|\partial_{x_1}^{\alpha}(\FBcS^t_{\psi}F)(x_1,x_2)|\\
&\lesssim\int_{\FBR^2}\int_{\FBR^{\times}}\int_{\FBR} |a|^{\frac{3}{4}} |\xi_1|^\alpha |\FBcF F(\mathbf{\xi},s,a)|\,\, |\FBcF \psi(-A_a{^t\!N_s}\xi)|
\,\,\frac{ \D s \D a \D \xi_1\D \xi_2}{|a|^3}\\
&\lesssim\int_{\FBR^2}\int_{|a|<\epsilon}\int_{\FBR} \langle \xi \rangle^{N}
\langle s \rangle^2 |a|^{-\frac{9}{4}} |\xi_1|^{\alpha}|\FBcF F(\mathbf{\xi},s,a)|\,\,|\FBcF \psi(-A_a{^t\!N_s}\xi)|
\frac{ \D s \D a \D \xi_1\D \xi_2}{\langle s \rangle^2 \langle \xi \rangle^{N} }\\
&+\int_{\FBR^2}\int_{|a|>\epsilon}\int_{\FBR}
\langle \xi \rangle^{N} \langle s \rangle^2
|a|^{-\frac{9}{4}} |\xi_1|^{\alpha}|\FBcF F(\mathbf{\xi},s,a)|\,\,|\FBcF \psi(-A_a{^t\!N_s}\xi)|
\frac{ \D s \D a \D \xi_1\D \xi_2}{\langle s \rangle^2 \langle \xi \rangle^{N} }\\
&\lesssim \rho_{N+\alpha,N,2,\frac{9}{4}} ^{0,0,0,0} (\FBcF F)\rho_0(\psi)+\rho_{N+\alpha,N,2,0} ^{0,0,0,0} (\FBcF F)\rho_0(\psi),
\end{align*}
which is dominated by a single product of norms. The terms $|\partial_{x_2}^{\beta}(S^t_{\psi}\Phi)(x_1,x_2)|$, $\beta\in  \mathbb{N}$, can be estimated in a similar fashion.

Next we consider multiplications by  monomials $x_1 ^k$,  $k\in  \mathbb{Z}_+$.
By formula \eqref{FBfrequencysynthesisoperator} we have
\begin{align*}
&|x_1^{k}(\FBcS^t_{\psi}F)(x_1,x_2)|\\
&=|\int_{\FBR^2}x_1^ke^{2\pi i x\xi} \int_{\FBR^{\times}}\int_{\FBR} |a|^{\frac{3}{4}} \FBcF F(\mathbf{\xi},s,a)\,\, \overline{\FBcF \psi(-a\xi_1,a|a|^{-\frac{1}{2}}(-\xi_2+s\xi_1))}
\,\,\frac{ \D s \D a \D \xi_1\D \xi_2}{|a|^3}|\\
&=|\int_{\FBR^2}
\frac{e^{2\pi i x\xi}}{(2\pi i)^{k}}
\int_{\FBR^{\times}}\int_{\FBR} |a|^{\frac{3}{4}} \partial_{\xi_1}^k[\FBcF F(\mathbf{\xi},s,a)\,\, \overline{\FBcF \psi(-a\xi_1,a|a|^{-\frac{1}{2}}(-\xi_2+s\xi_1))}]
\,\,\frac{ \D s \D a \D \xi}{|a|^3}|\\
&\lesssim \int_{\FBR^2}\int_{\FBR^{\times}}\int_{\FBR} |a|^{\frac{3}{4}} |\partial_{\xi_1}^k[\FBcF F(\mathbf{\xi},s,a)\,\, \FBcF \psi(-a\xi_1,a|a|^{-\frac{1}{2}}(-\xi_2+s\xi_1))]|
\,\,\frac{\D s \D a \D \xi}{|a|^3},
\end{align*}
which is less than or equal to a finite sum of addends of the form
\begin{align*}
&\int_{\FBR^2}\int_{\FBR^{\times}}\int_{\FBR} |a|^{\frac{3}{4}+k_2+\frac{k_3}{2}}|s|^{k_3} |\partial_{\xi_1}^{k_1}\FBcF F(\mathbf{\xi},s,a)| \times \\
& \times |\partial_1^{k_2}\partial_2^{k_3}\FBcF \psi(-a\xi_1,a|a|^{-\frac{1}{2}}(-\xi_2+s\xi_1))|
\frac{ \D s \D a \D \xi_1\D \xi_2}{|a|^3}\\
&=\int_{\FBR^2}\int_{\FBR^{\times}}\int_{\FBR} |a|^{-\frac{9}{4}+k_2+\frac{k_3}{2}}|s|^{k_3}(1+|\xi|^2)^{N/2}(1+s^2)^{\frac{M}{2}}|\partial_{\xi_1}^{k_1}\FBcF F(\mathbf{\xi},s,a)|\times\\
& \times|\partial_1^{k_2}\partial_2^{k_3}\FBcF \psi(-a\xi_1,a|a|^{-\frac{1}{2}}(-\xi_2+s\xi_1))|
\frac{ \D s \D a \D \xi_1\D \xi_2}{(1+s^2)^{\frac{M}{2}}(1+|\xi|^2)^{\frac{N}{2}}},
\end{align*}
where $k_1,\, k_2,\, k_3\in\FBN$ are less than $k$, $N>2$ and $M> k_3+1$.
Then, splitting the integral over $\FBR^\times$ into integrals over
$|a|<\epsilon$ and $|a|>\epsilon$, with $\epsilon>0$, we obtain
\begin{align*}
&\int_{\FBR^2}\int_{\FBR^{\times}}\int_{\FBR} |a|^{\frac{3}{4}+k_2+\frac{k_3}{2}}|s|^{k_3} |\partial_{\xi_1}^{k_1}\FBcF F(\mathbf{\xi},s,a)| \times \\
& \times |\partial_1^{k_2}\partial_2^{k_3}\FBcF \psi(-a\xi_1,a|a|^{-\frac{1}{2}}(-\xi_2+s\xi_1))|
\frac{ \D s \D a \D \xi_1\D \xi_2}{|a|^3}\\
&\lesssim \rho_{N,N,M,|-\frac{9}{4}+k_2+\frac{k_3}{2}|} ^{k_1,0,0,0} (\FBcF F)\rho_{k_2+k_3}(\psi)
+\rho_{N,N,M,k_2+\frac{k_3}{2}} ^{k_1,0,0,0} (\FBcF F)\rho_{k_2+k_3}(\psi),
\end{align*}
which is dominated by a single product of norms.

\par

Obviously, we can treat $|{x_2}^{k}(\FBcS^t_{\psi}F)(x_1,x_2)|$, $k\in  \mathbb{Z}_+$, in the same manner,
and we conclude that the operator $\mathcal S^t $ is continuous from
$  \mathcal{S}(\mathbb {S})\times \mathcal{S}_0(\mathbb{R}^2)$ into $ \mathcal{S}(\mathbb{R}^2) $.

\par

Finally, we have to show that $\FBcS^t_{\psi}F\in\FBcS_0(\FBR^2)$ for every $F\in \mathcal{S}(\mathbb {S})$ and $\psi\in\mathcal{S}_0(\mathbb{R}^2)$. The idea is to prove the equivalent condition
\begin{equation}\label{FBeq:limitmoments}
\lim_{\xi\to0}\frac{\FBcF \FBcS^t_{\psi}F(\xi)}{|\xi|^k}=0,
\end{equation}
for every $k\in\FBN$, see {\cite[Lemma 6.0.4]{FBhol1995}}. We first observe that by formula \eqref{FBfrequencysynthesisoperator} and
the Fourier inversion formula we have
\begin{align}\label{synthesisfrequency}
&\FBcF \FBcS_{\psi}^t F(\xi_1,\xi_2)=\int_{\FBR^{\times}} \int_{\FBR} |a|^{3/4} \FBcF F(\mathbf{\xi},s,a)\,\, \overline{\FBcF \psi(-a\xi_1,a|a|^{-\frac{1}{2}}(-\xi_2+s\xi_1))}
\,\,\frac{ \D s \D a }{|a|^3},
\end{align}
where we recall that $\FBcF F$ denotes the Fourier transform of $F$ with respect to the variable $b$. We will prove that for every $k\in\FBN$ there exists $N_k\in\mathbb{Z}_+$ and a constant $C>0$ such that
\[
\frac{|\FBcF \FBcS^t_{\psi}F(r\cos{\theta},r\sin{\theta})|}{r^k}\leq Cr^{N_k},
\]
for every $r\in\FBR_+$ and for every $\theta\in[0,2\pi)$.
Indeed, by equation \eqref{synthesisfrequency}, we have
\begin{align*}
&|\FBcF S^t_{\psi}F(r\cos{\theta},r\sin{\theta})|
\leq  \int_{\FBR^{\times}}\int_{\FBR} |a|^{3/4} |\FBcF F((r\cos{\theta},r\sin{\theta}),s,a)|\times \\
& \times |\FBcF \psi(-ar\cos{\theta},a|a|^{-\frac{1}{2}}r(-\sin{\theta}+s\cos{\theta}))|
\,\,\frac{ \D s \D a }{|a|^3}.
\end{align*}

By Lemma \ref{FBLm:mainlm}, for every $m\in\FBN$ there exists $g\in  \mathcal{S}_{0} (\mathbb{R}^2)$ such that
\begin{align*}
& \FBcF \psi(\xi_1,\xi_2)=\xi_1^m \FBcF g(\xi_1,\xi_2),\quad \xi_1,\,\xi_2\in\FBR
\end{align*}
and we can continue the above inequality as follows
\begin{align*}
&|\FBcF \FBcS^t_{\psi}F(r\cos{\theta},r\sin{\theta})|\\
&\lesssim \int_{\FBR^{\times}}\int_{\FBR} |a|^{-\frac{9}{4}+m} |\FBcF F((r\cos{\theta},r\sin{\theta}),s,a)|r^{m}|\cos{\theta}|^{m}\times\\
&\times |\FBcF g(-ar\cos{\theta},a|a|^{-\frac{1}{2}}r(-\sin{\theta}+s\cos{\theta}))|(1+s^2)\frac{ \D s \D a }{(1+s^2)}.
\end{align*}

Next, by splitting the integral over $\FBR^{\times}$ into integrals ove $|a|<\epsilon$ and $|a|>\epsilon$ for some $\epsilon>0$, we obtain
\begin{align*}
&|\FBcF \FBcS^t_{\psi}F(r\cos{\theta},r\sin{\theta})|
\lesssim r^{m}[\rho_{0,0,2,\frac{9}{4}}^{0,0,0,0}(\FBcF F)\rho_0(g)+\rho_{0,0,2,m}^{0,0,0,0}(\FBcF F)\rho_0(g)]\lesssim r^{m},
\end{align*}
where the hidden constant is independent of $\theta\in[0,2\pi)$. Therefore, by choosing $m>k$, we obtain
 \[
\frac{|\FBcF \FBcS^t_{\psi}F(r\cos{\theta},r\sin{\theta})|}{r^k}\lesssim r^{m-k},
\]
which implies that \eqref{FBeq:limitmoments} holds true for every $k\in\FBN$, and we conclude that $\FBcS^t_{\psi}F$ belongs to $\FBcS_0(\FBR^2)$.
\end{proof}

Next we show that the shearlet synthesis operator is in fact the adjoint of the shearlet transform in the following sense.
Let $\psi\in\mathcal{S}_0(\mathbb{R}^2)$. If $f\in L^1(\FBR^2)\cap L^2(\FBR^2)$ and $F\in \FBcS(\mathbb{S})$, then by Fubini theorem we have that
\begin{align}\label{FBeq:dualityrelationfirst}
\nonumber\int_{\FBR^2}f(x)\FBcS_{\overline{\psi}}^tF(x)\D x&=\int_{\FBR^2}f(x)\int_{\FBR^\times}\int_{\FBR}\int_{\FBR^2} F(b,s,a)
\,\, \overline{\pi_{b,s,a}\psi(x)}\,\,\frac{\D b\D s\D a}{|a|^3}\D x\\
\nonumber&=\int_{\FBR^\times}\int_{\FBR}\int_{\FBR^2}F(b,s,a)\int_{\FBR^2}f(x)\overline{\pi_{b,s,a}\psi(x)}\D x
\,\,\frac{\D b\D s\D a}{|a|^3}\\
&=\int_{\FBR^\times}\int_{\FBR}\int_{\FBR^2}\FBcS_{\psi}f(b,s,a)F(b,s,a)\,\,\frac{\D b\D s\D a}{|a|^3}.
\end{align}
Therefore, since $L^1(\FBR^2)\cap L^2(\FBR^2)$ naturally embeds into $\FBcS'_0(\FBR^2)$ and by the identification~\eqref{FBdualitysynthesisspace}, we may write \eqref{FBeq:dualityrelationfirst} as
\begin{equation*}
_{\FBcS'(\mathbb{S})}(\FBcS_{\psi}f,F )_{ \FBcS(\mathbb{S})}={_{\FBcS'_0(\FBR^2)}}(f, \FBcS_{\overline{\psi}}^tF )_{\FBcS_0(\FBR^2)}.
\end{equation*}

This duality relation will motivate our definition of the distributional shearlet transform in Section \ref{FBsec:5}.

\par

If $f\in L^2(\FBR^2)$ then, by using the definition of the shearlet synthesis operator, we can rewrite reconstruction formula~\eqref{FBreconstructionformulashearlet2} as
\begin{equation}\label{eq:newexpressionreconstructionformula}
f= \frac{1}{C_{\psi,\phi}} (\FBcS_{\phi}^t\circ\FBcS_{\psi})f,
\end{equation}
where the equality holds in the weak-sense. Next we show that if $\psi,\, \phi,\, f\in \FBcS_0(\FBR^2)$, then the reconstruction formula holds pointwise.

\begin{proposition}\label{prop:FBreconstructionformulashearletpointwise}
Let $\psi,\, \phi\in \FBcS_0(\FBR^2)$ be such that \eqref{FBeqn:admvectequivalent2} holds true. If $f\in \FBcS_0(\FBR^2)$, then
\begin{equation}\label{FBreconstructionformulashearletpointwise}
 f(x) = \frac{1}{C_{\psi,\phi}}  (\FBcS_{\phi}^t\circ\FBcS_{\psi})f(x), \;\;\; x\in \FBR^2.
\end{equation}
\end{proposition}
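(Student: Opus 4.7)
The plan is to lift the $L^2$-weak-sense identity \eqref{eq:newexpressionreconstructionformula}, already available on $L^2(\FBR^2)$, to a pointwise identity on all of $\FBR^2$ when $f,\psi,\phi$ belong to $\FBcS_0(\FBR^2)$. The key observation is that under these stronger assumptions both sides of \eqref{FBreconstructionformulashearletpointwise} are continuous functions. Indeed, Theorem~\ref{thm:continuityshearlettransform} gives $\FBcS_\psi f\in\FBcS(\mathbb{S})$, and Theorem~\ref{FBthm:continuitysynthesisoperator} then gives $\FBcS_\phi^t\FBcS_\psi f\in\FBcS_0(\FBR^2)\subset C^\infty(\FBR^2)$; of course $f$ is itself smooth. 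Thus it is enough to establish the identity almost everywhere, after which continuity of both members upgrades it to a pointwise one on all of $\FBR^2$.

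To do this directly, without invoking a density argument in $L^2$, I would compute $\FBcF(\FBcS_\phi^t\FBcS_\psi f)$ by chaining formulas \eqref{eq:shearlettransformfrequency} and \eqref{synthesisfrequency}. From \eqref{eq:shearlettransformfrequency} one reads off the partial Fourier transform of $\FBcS_\psi f$ in $b$,
\[
\FBcF_b\FBcS_\psi f(\xi,s,a)=|a|^{3/4}\,\FBcF f(\xi)\,\overline{\FBcF\psi(A_a{}^t\!N_s\xi)},
\]
and substituting this into \eqref{synthesisfrequency} with $F=\FBcS_\psi f$, after collecting powers of $|a|$, yields
\[
\FBcF(\FBcS_\phi^t\FBcS_\psi f)(\xi)=\FBcF f(\xi)\int_{\FBR^{\times}}\int_{\FBR}\overline{\FBcF\psi(A_a{}^t\!N_s\xi)}\,\FBcF\phi(A_a{}^t\!N_s\xi)\,\frac{\D s\,\D a}{|a|^{3/2}}.
\]
By the joint admissibility assumption \eqref{FBeqn:admvectequivalent2}, the inner double integral equals $C_{\psi,\phi}$ for almost every $\xi\in\FBR^2\setminus\{0\}$. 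Hence $\FBcF(\FBcS_\phi^t\FBcS_\psi f)=C_{\psi,\phi}\,\FBcF f$ almost everywhere, and since both sides lie in $\FBcF(\FBcS_0(\FBR^2))\subset\FBcS(\FBR^2)$ they are continuous, so the Fourier-side identity holds pointwise on $\FBR^2$; inverting the Fourier transform gives \eqref{FBreconstructionformulashearletpointwise}.

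The main technical point to verify is the Fubini swap that allows one to chain \eqref{eq:shearlettransformfrequency} into \eqref{synthesisfrequency} as a single integral over $\FBR\times\FBR^{\times}$, namely the absolute integrability of
\[
|a|^{-9/4}\,|\FBcF_b\FBcS_\psi f(\xi,s,a)|\,|\FBcF\phi(A_a{}^t\!N_s\xi)|
\]
on $\FBR\times\FBR^{\times}$. This is, however, routine given the strong decay of $\FBcS_\psi f\in\FBcS(\mathbb{S})$ in $s$ and in both tails $|a|\to0,+\infty$ provided by Theorem~\ref{thm:continuityshearlettransform}, combined with the Schwartz-class decay of $\FBcF\phi$; splitting $\int_{\FBR^\times}\D a$ into $|a|<\epsilon$ and $|a|>\epsilon$ and absorbing an $\langle s\rangle^{-2}$ factor into the norms \eqref{FBnorma-hl2} reduces the task to a direct repetition of the arguments already carried out in the derivation of \eqref{synthesisfrequency} inside the proof of Theorem~\ref{FBthm:continuitysynthesisoperator}.
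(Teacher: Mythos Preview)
Your argument is correct and is essentially the paper's own proof: both compute the composition on the Fourier side, swap the $(s,a)$-integral inside via Fubini, and collapse the result using the admissibility identity \eqref{FBeqn:admvectequivalent2}. The only cosmetic difference is that you route through the pre-derived formulas \eqref{eq:shearlettransformfrequency} and \eqref{synthesisfrequency} and then invoke continuity to pass from a.e.\ to pointwise, whereas the paper expands $(\FBcS_\phi^t\circ\FBcS_\psi)f(x)$ directly in $x$, integrates out $b$ to recognize $\FBcF\pi_{-x,s,a}\phi(\xi)$, and arrives at the same $(s,a)$-integral.
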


\begin{proof}
We recall that by Theorem~\ref{thm:continuityshearlettransform} if $\psi$ and $f$ belong to $\FBcS_0(\FBR^2)$, then $\FBcS_{\psi} f\in\FBcS(\mathbb{S})$. By the Plancherel theorem, Fubini theorem and formula~\eqref{FBshearfreq} for every $x\in \FBR^2$ we have
\begin{align*}
 (\FBcS_{\phi}^t\circ\FBcS_{\psi})f(x)&=\int_{\FBR^{\times}}\int_{\FBR}\int_{\FBR^2}
 \FBcS_{\psi}f(b,s,a) \,     \pi_{b,s,a}\phi(x)\ \frac{\D b\D s\D a}{|a|^3}\\
 &=\int_{\FBR^{\times}}\int_{\FBR}\int_{\FBR^2}\int_{\FBR^2}\FBcF f(\xi) e^{2\pi i b \xi} \overline{\FBcF \psi(A_a{^t\!N_s}\xi)}
\, \pi_{b,s,a}\phi(x)\ \frac{\D\xi\D b\D s\D a}{|a|^\frac{9}{4}}\\
 &=\int_{\FBR^{\times}}\int_{\FBR}\int_{\FBR^2}\FBcF f(\xi)\overline{\FBcF \psi(A_a{^t\!N_s}\xi)}\int_{\FBR^2} e^{2\pi i b\xi}
\,  \pi_{b,s,a}\phi(x)\D b\ \frac{\D \xi\D s\D a}{|a|^\frac{9}{4}}\\
 &=\int_{\FBR^{\times}}\int_{\FBR}\int_{\FBR^2}\FBcF f(\xi)\overline{\FBcF \psi(A_a{^t\!N_s}\xi)}\FBcF\pi_{-x,s,a}\phi(\xi)\ \frac{\D \xi\D s\D a}{|a|^\frac{9}{4}}\\
 &=\int_{\FBR^{\times}}\int_{\FBR}\int_{\FBR^2}\FBcF f(\xi)\overline{\FBcF \psi(A_a{^t\!N_s}\xi)}\FBcF \phi(A_a{^t\!N_s}\xi)e^{2\pi i x \xi}\ \frac{\D \xi\D s\D a}{|a|^\frac{3}{2}}.
\end{align*}
Hence, by equation~\eqref{FBeqn:admvectequivalent2} for every $x\in \FBR^2$
\begin{align*}
 (\FBcS_{\phi}^t\circ\FBcS_{\psi})f(x)=\int_{\FBR^2}\FBcF f(\xi)e^{2\pi i x\xi}\int_{\FBR^{\times}}\int_{\FBR}\overline{\FBcF \psi(A_a{^t\!N_s}\xi)}\FBcF \phi(A_a{^t\!N_s}\xi)\ \frac{\D s\D a}{|a|^\frac{3}{2}}\D \xi= C_{\psi,\phi} f(x),
\end{align*}
and this concludes the proof.
\end{proof}

\section{The shearlet transform on $\mathcal{S}'_0(\FBR^2)$}%%%%%%%%%%%%%%%%%%%%%%%%%%%%%%	THE SHEARLET TRANSFORM OF DISTRIBUTIONS
\label{FBsec:5}

In this last section, we extend the definition of the shearlet transform to the space of Lizorkin distributions and we prove its consistency with the classical definition
for test functions. Then we extend reconstruction formula~\eqref{eq:newexpressionreconstructionformula} to $\mathcal{S}'_0(\FBR^2)$.
\begin{definition}\label{FBdefn:shearlettransformdistributions}
Let $f\in \FBcS'_0(\FBR^2)$ and $\psi\in\mathcal{S}_0(\FBR^2)$. We define the shearlet transform $\mathscr{S}_\psi f$ of $f$ with respect to $\psi$ as the distribution in $\FBcS'(\mathbb{S})$ whose action
on test functions is given by
\begin{equation*}
_{\FBcS'(\mathbb{S})}( \mathscr S_{\psi}f,\Phi )_{\FBcS(\mathbb{S})} = {_{\FBcS'_0(\FBR^2)}}( f,\FBcS^t_{\overline{\psi}}\Phi )_{\FBcS_0(\FBR^2)},\quad \Phi\in \FBcS(\mathbb{S}).
\end{equation*}
\end{definition}

The validity of this definition follows from Theorem~\ref{FBthm:continuitysynthesisoperator}. Moreover, Definition \ref{FBdefn:shearlettransformdistributions}
and Theorem \ref{thm:continuityshearlettransform} motivate the next one.

\begin{definition}\label{FBdefn:synthesisoperatordistributions}
Let $\Psi\in \FBcS'(\mathbb{S})$ and $\psi\in\mathcal{S}_0(\FBR^2)$. The shearlet synthesis operator $\mathscr S^t_{\psi}\Psi$ of $\Psi$ with respect to $\psi$ is defined as the
Lizorkin distribution whose action on test functions is given by
\[
{_{\FBcS'_0(\FBR^2)}}(\mathscr S^t_{\psi}\Psi,f)_{\FBcS_0(\FBR^2)}=
{_{\FBcS'(\mathbb{S})}}( \Psi,\FBcS_{\overline{\psi}}f)_{\FBcS(\mathbb{S})},\quad f\in \FBcS_0(\FBR^2).
\]
\end{definition}

The consistence of Definitions~\ref{FBdefn:shearlettransformdistributions} and \ref{FBdefn:synthesisoperatordistributions} is guaranteed by Theorems~\ref{FBthm:continuitysynthesisoperator} and \ref{thm:continuityshearlettransform}, respectively. Furthermore, we immediately obtain the following result.
\begin{proposition}
The shearlet transform $\mathscr S_{\psi}\colon \FBcS'_0(\FBR^2)\to\FBcS'(\mathbb{S})$ and the shearlet synthesis operator $\mathscr S^t_{\psi}: \FBcS'(\mathbb{S})\to \FBcS'_0(\FBR^2)$ are continuous linear operators.
\end{proposition}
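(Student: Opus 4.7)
The plan is to observe that this proposition is an essentially immediate corollary of the two main continuity theorems already established, combined with the standard duality principle: the transpose of a continuous linear map between locally convex spaces is continuous between the corresponding strong dual spaces. Hence no new estimates are needed; the work has already been done in Theorems \ref{thm:continuityshearlettransform} and \ref{FBthm:continuitysynthesisoperator}.

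More concretely, I would first recall that by Definition \ref{FBdefn:shearlettransformdistributions}, the action
\[
{_{\FBcS'(\mathbb{S})}}(\mathscr S_{\psi}f,\Phi)_{\FBcS(\mathbb{S})} = {_{\FBcS'_0(\FBR^2)}}(f,\FBcS^t_{\overline{\psi}}\Phi)_{\FBcS_0(\FBR^2)}
\]
identifies $\mathscr S_\psi$ as the transpose of the linear map $\FBcS^t_{\overline{\psi}}\colon \FBcS(\mathbb{S})\to\FBcS_0(\FBR^2)$. Since Theorem \ref{FBthm:continuitysynthesisoperator} ensures that $\FBcS^t_{\overline{\psi}}$ is continuous (as $\overline{\psi}\in\FBcS_0(\FBR^2)$ whenever $\psi\in\FBcS_0(\FBR^2)$), the general transposition theorem for locally convex spaces (see, e.g., Tr\`eves) yields that $\mathscr S_\psi$ is continuous from $\FBcS'_0(\FBR^2)$ into $\FBcS'(\mathbb{S})$ when both duals are equipped with the strong topology, which is the convention fixed in Subsection \ref{FBsubsec:1.1}. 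Linearity in $f$ is obvious from the definition.

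The argument for $\mathscr S^t_\psi$ is strictly parallel. By Definition \ref{FBdefn:synthesisoperatordistributions}, $\mathscr S^t_\psi$ is the transpose of $\FBcS_{\overline{\psi}}\colon \FBcS_0(\FBR^2)\to \FBcS(\mathbb{S})$, which is continuous by Theorem \ref{thm:continuityshearlettransform}. Invoking the same transposition principle gives continuity of $\mathscr S^t_\psi\colon \FBcS'(\mathbb{S})\to \FBcS'_0(\FBR^2)$ for the strong dual topologies.

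There is essentially no obstacle in this proof: the hard analytic work has already been carried out in the two continuity theorems, and the remaining step is purely formal duality. If any point deserves emphasis it is the verification that Definitions \ref{FBdefn:shearlettransformdistributions} and \ref{FBdefn:synthesisoperatordistributions} really do make the maps well-defined elements of the respective dual spaces (so that the transposition formalism applies), but this is exactly what the continuity of $\FBcS^t_{\overline{\psi}}$ and $\FBcS_{\overline{\psi}}$ on test functions guarantees, as already noted in the paragraph preceding the statement.
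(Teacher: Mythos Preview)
Your argument is correct and matches the paper's own proof, which simply states that the result follows immediately from Theorems \ref{thm:continuityshearlettransform} and \ref{FBthm:continuitysynthesisoperator}. Your write-up makes explicit the underlying duality/transposition reasoning that the paper leaves implicit, but the approach is the same.
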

\begin{proof}
The proof follows immediately by Theorem~\ref{thm:continuityshearlettransform} and \ref{FBthm:continuitysynthesisoperator}.
\end{proof}
We now extend the reconstruction formula \eqref{eq:newexpressionreconstructionformula} to the space of Lizorkin distributions.
\begin{proposition}\label{prop:reconstructionformuladistributions}
Let $\psi,\,\phi\in \FBcS_0(\FBR^2)$ be such that \eqref{FBeqn:admvectequivalent2} holds true. For every $f\in\FBcS'_0(\FBR^2)$
\[
f= \frac{1}{C_{\psi,\phi}}(\mathscr S_{\phi}^t\circ\mathscr S_{\psi})f.
\]
\end{proposition}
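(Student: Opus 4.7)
The plan is to reduce the distributional reconstruction formula to the pointwise reconstruction formula of Proposition~\ref{prop:FBreconstructionformulashearletpointwise} by duality. Fix $f\in\FBcS'_0(\FBR^2)$. Since both $\mathscr S_\psi$ and $\mathscr S^t_\phi$ are well defined on the spaces indicated (by Theorem~\ref{thm:continuityshearlettransform} and Theorem~\ref{FBthm:continuitysynthesisoperator}), the composition $(\mathscr S_\phi^t\circ\mathscr S_\psi)f$ is a well defined element of $\FBcS'_0(\FBR^2)$, and it suffices to prove that it acts on an arbitrary test function $g\in\FBcS_0(\FBR^2)$ as $C_{\psi,\phi}\,{_{\FBcS'_0}(f,g)_{\FBcS_0}}$.

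First I would unwind the two transpose definitions. Using Definitions~\ref{FBdefn:synthesisoperatordistributions} and \ref{FBdefn:shearlettransformdistributions} in succession,
\begin{equation*}
{_{\FBcS'_0}}\bigl(\mathscr S_\phi^t\mathscr S_\psi f,\,g\bigr)_{\FBcS_0}
= {_{\FBcS'(\mathbb{S})}}\bigl(\mathscr S_\psi f,\,\FBcS_{\overline{\phi}}g\bigr)_{\FBcS(\mathbb{S})}
= {_{\FBcS'_0}}\bigl(f,\,\FBcS^t_{\overline{\psi}}\FBcS_{\overline{\phi}}g\bigr)_{\FBcS_0},
\end{equation*}
where all pairings make sense because $\FBcS_{\overline{\phi}}g\in\FBcS(\mathbb{S})$ (by Theorem~\ref{thm:continuityshearlettransform}) and $\FBcS^t_{\overline{\psi}}\FBcS_{\overline{\phi}}g\in\FBcS_0(\FBR^2)$ (by Theorem~\ref{FBthm:continuitysynthesisoperator}).

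The key step is then to apply the pointwise reconstruction formula \eqref{FBreconstructionformulashearletpointwise} to the analyzing/synthesizing pair $(\overline{\phi},\overline{\psi})$, to obtain
\begin{equation*}
(\FBcS^t_{\overline{\psi}}\circ\FBcS_{\overline{\phi}})g(x)=C_{\overline{\phi},\overline{\psi}}\,g(x),\qquad x\in\FBR^2.
\end{equation*}
Before quoting Proposition~\ref{prop:FBreconstructionformulashearletpointwise}, one must verify that $(\overline{\phi},\overline{\psi})$ satisfies the admissibility condition \eqref{FBeqn:admvectequivalent2} and compute the resulting constant. Using the identity $\FBcF\overline{h}(\xi)=\overline{\FBcF h(-\xi)}$ and the change of variable $\xi\mapsto -\xi$ inside \eqref{FBeqn:admvectequivalent2}, a short calculation gives
\begin{equation*}
C_{\overline{\phi},\overline{\psi}}
=\int_{\FBR^2}\frac{\overline{\FBcF\overline{\phi}(\xi)}\FBcF\overline{\psi}(\xi)}{|\xi_1|^2}\D\xi
=\int_{\FBR^2}\frac{\FBcF\phi(\xi)\overline{\FBcF\psi(\xi)}}{|\xi_1|^2}\D\xi
=\overline{C_{\phi,\psi}}=C_{\psi,\phi},
\end{equation*}
and the same manipulation applied to the equivalent formulation shows that \eqref{FBeqn:admvectequivalent2} for $(\overline{\phi},\overline{\psi})$ reduces to the one assumed for $(\psi,\phi)$. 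Substituting $(\FBcS^t_{\overline{\psi}}\circ\FBcS_{\overline{\phi}})g=C_{\psi,\phi}\,g$ into the chain of equalities above yields
\begin{equation*}
{_{\FBcS'_0}}\bigl(\mathscr S_\phi^t\mathscr S_\psi f,\,g\bigr)_{\FBcS_0}
=C_{\psi,\phi}\,{_{\FBcS'_0}}(f,g)_{\FBcS_0},
\end{equation*}
which is the claim.

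The main (minor) obstacle is the bookkeeping around complex conjugation: one has to check both that the admissibility condition is preserved when passing from $(\psi,\phi)$ to $(\overline{\phi},\overline{\psi})$ and that the resulting constant is precisely $C_{\psi,\phi}$. Everything else is a mechanical application of the continuity of the four transforms involved and of the test-function reconstruction formula of Proposition~\ref{prop:FBreconstructionformulashearletpointwise}.
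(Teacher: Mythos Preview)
Your proof is correct and follows essentially the same route as the paper's: unwind the two transpose definitions to land on $(f,\FBcS^t_{\overline{\psi}}\FBcS_{\overline{\phi}}g)$, apply the pointwise reconstruction formula of Proposition~\ref{prop:FBreconstructionformulashearletpointwise} with the pair $(\overline{\phi},\overline{\psi})$, and identify the constant $C_{\overline{\phi},\overline{\psi}}=C_{\psi,\phi}$. You spell out the admissibility check and the constant computation more carefully than the paper does, which is a welcome addition.
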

\begin{proof}
Let $f\in \FBcS'_0(\FBR^2)$. By Definitions~\ref{FBdefn:shearlettransformdistributions}, \ref{FBdefn:synthesisoperatordistributions} and by Proposition~\ref{prop:FBreconstructionformulashearletpointwise} we have that for every $\varphi\in \FBcS_0(\FBR^2)$ the following equalities hold:
\begin{align*}
((\mathscr S_{\phi}^t\circ\mathscr S_{\psi})f,\varphi)=(\mathscr S_{\psi}f,\mathcal S_{\overline{\phi}}\varphi)=( f,(\mathcal S^t_{\overline{\psi}}\circ\mathcal S_{\overline{\phi}})\varphi)= C_{\overline{\phi},\overline{\psi}}( f,\varphi)
=C_{\psi,\phi}( f,\varphi),
\end{align*}
as claimed.
\end{proof}

The next theorem shows that Definition~\ref{FBdefn:shearlettransformdistributions} is in fact
consistent with the shearlet transform for test functions introduced in Definition \ref{FBdefsheartransf}.
Furthermore, it also shows that our definition is a natural extension of  the one considered in \cite{FBkula09,FBgr11},
where the shearlet transform of a tempered distribution $f$ with respect to an admissible vector $\psi\in\FBcS(\FBR^2)$ is defined as the function on $\mathbb{S}$ given by
\begin{equation*}
S_\psi f(b,s,a)={_{\mathcal{S}'(\FBR^2)}}(f, \pi_{b,s,a}\psi)_{\mathcal{S}(\FBR^2)},
\end{equation*}
for every $(b,s,a)\in\mathbb{S}$.
Following the coorbit space approach, given a suitable test function space usually denoted by $\mathcal{H}_{1,w}$, where $w$ is a weight function,
and its anti-dual $\mathcal{H}_{1,w}^{\sim}$, the extended shearlet transform of $f\in \mathcal{H}_{1,w}^{\sim}$ with respect to $\psi\in \mathcal{H}_{1,w}$ is defined by
\[
\FBcS_{\psi}f(b,s,a)={_{\mathcal{H}_{1,w}^{\sim}}}(f, \pi_{b,s,a}\psi)_{\mathcal{H}_{1,w}},
\]
for every $(b,s,a)\in\mathbb{S}$, \cite{dahlikeetal}. Theorem~\ref{FBteo:desingularization} shows the equivalence of our duality approach with the coorbit space one
in the sense that it identifies the shearlet transform of any Lizorkin distribution with  the function given by
\[
(b,s,a)\mapsto{_{\FBcS'_0(\FBR^2)}}( f, \pi_{b,s,a}\psi)_{\FBcS_0(\FBR^2)},
\;\;\;  (b,s,a)\in\mathbb{S}.
\]

\par

\begin{theorem}\label{FBteo:desingularization}
Let $\psi\in\mathcal{S}_0(\FBR^2)$ and $f\in \FBcS_0'(\FBR^2)$. The function $S_\psi f$ defined by
\begin{equation}\label{eq:functiondesingularization}
S_\psi f(b,s,a)={_{\mathcal{S}'_0(\FBR^2)}}(f, \pi_{b,s,a}\psi)_{\mathcal{S}_0(\FBR^2)}
\end{equation}
is a smooth function of at most polynomial growth on $\mathbb{S}$.
Furthermore, by identification~\eqref{FBdualitysynthesisspace} it follows that
%\begin{equation*}
%\FBcS_{\psi}f(b,s,a)=\langle f, S_{b,s,a}\psi\rangle,
%\end{equation*}
%that is,
\begin{equation}\label{eq:identificationdistributionfunction}
_{\FBcS'(\mathbb{S})}( \mathscr S_{\psi}f,\Phi )_{\FBcS(\mathbb{S})}={_{\FBcS'(\mathbb{S})}}(S_\psi f, \Phi)_{\FBcS(\mathbb{S})},
\;\;\; \text{ for every } \;\;\; \Phi\in\FBcS(\mathbb{S}).
%\int_{\mathbb{S}} ( f, S_{b,s,a}\psi)_{\mathcal{S}'_0(\FBR^2)\times\mathcal{S}_0(\FBR^2)}\Phi(b,s,a) \frac{\D b\D s\D a}{|a|^3},
\end{equation}
\end{theorem}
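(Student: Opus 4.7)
The strategy is to reduce both claims to the single fact that the orbit map $(b,s,a)\mapsto \pi_{b,s,a}\psi$ is a smooth map from $\mathbb{S}$ into $\mathcal{S}_0(\FBR^2)$ whose derivatives of every order are, in each seminorm $\rho_\nu$, of polynomial growth in $b$ and $s$ and of polynomial growth in $|a|+|a|^{-1}$. Once this is established, the smoothness, polynomial bound, and the integral identity \eqref{eq:identificationdistributionfunction} all follow by pulling the continuous linear functional $f$ through the relevant limits and integrals.

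The plan is to first show that $\pi_{b,s,a}\psi\in\mathcal{S}_0(\FBR^2)$ for every $(b,s,a)\in\mathbb{S}$. This is a direct consequence of Lemma~\ref{FBLm:stability}, since $\pi_{b,s,a}$ is a composition of the (invertible linear) shearing $N_s$, the anisotropic dilation $A_a$, and the translation $T_b$, all of which preserve $\mathcal{S}_0(\FBR^2)$. Next I would differentiate $(b,s,a)\mapsto \pi_{b,s,a}\psi$ formally in each of the four parameters using \eqref{eq:shearletrappr}, or equivalently in the frequency picture \eqref{FBshearfreq}: derivatives in $b_j$ produce $\partial_{x_j}\pi_{b,s,a}\psi$ (up to entries of $A_a^{-1}N_s^{-1}$), derivatives in $s$ produce factors $a|a|^{-1/2}\xi_1$ on the Fourier side (times a sheared partial derivative of $\psi$), and derivatives in $a$ produce, via the Leibniz and chain rules, finite linear combinations of expressions of the same type multiplied by polynomial weights in $a,|a|^{-1},s,\xi_1,\xi_2$. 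Each resulting map is of the form $(b,s,a)\mapsto \pi_{b,s,a}\widetilde\psi$ for some $\widetilde\psi\in\mathcal{S}_0(\FBR^2)$ (using Lemma~\ref{FBLm:mainlm} for the polynomial factors in $\xi$), so it lies in $\mathcal{S}_0(\FBR^2)$ and depends continuously on $(b,s,a)$ in the topology of $\mathcal{S}_0(\FBR^2)$. A direct computation then yields the seminorm bound
\[
\rho_{\nu}\bigl(\partial_a^{\gamma}\partial_s^{\beta}\partial_{b}^{\alpha}\pi_{b,s,a}\psi\bigr)\lesssim \langle b\rangle^{\nu}\langle s\rangle^{\nu+\beta+\gamma}\bigl(|a|^{N_1}+|a|^{-N_2}\bigr)
\]
for some $N_1,N_2$ depending on $\nu,\alpha,\beta,\gamma$, where the shear in $N_s^{-1}$ contributes the $\langle s\rangle$ factors and the dilation gives the mixed $|a|$-powers.

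With the smooth $\mathcal{S}_0$-valued orbit map in hand, the first assertion is immediate: because $f\colon\mathcal{S}_0(\FBR^2)\to\FBC$ is continuous, composition with a smooth Banach/Fr\'echet-valued map is smooth, and derivatives of $S_\psi f$ are obtained by applying $f$ to the corresponding derivatives of $\pi_{b,s,a}\psi$. Writing $|(f,\varphi)|\leq C\rho_{\nu}(\varphi)$ for an appropriate $\nu$ gives the claimed polynomial bound in $b,s$ and in $|a|^{m}+|a|^{-m}$ for some $m$, hence $S_\psi f$ is smooth and of at most polynomial growth on $\mathbb{S}$.

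For the identification \eqref{eq:identificationdistributionfunction}, I would fix $\Phi\in\mathcal{S}(\mathbb{S})$ and interpret
\[
\FBcS^t_{\overline{\psi}}\Phi=\int_{\FBR^{\times}}\int_{\FBR}\int_{\FBR^2}\Phi(b,s,a)\,\pi_{b,s,a}\overline{\psi}\,\frac{\D b\D s\D a}{|a|^3}
\]
as a Bochner integral in $\mathcal{S}_0(\FBR^2)$. Absolute convergence in every seminorm $\rho_{\nu}$ follows by combining the orbit seminorm bound of the previous paragraph with the fact that $\Phi$ decays in $\mathcal{S}(\mathbb{S})$ faster than any polynomial in $b,s$ and faster than any $|a|^{\pm m}$, i.e.\ the weights built into the norms \eqref{FBnorma-hl2} dominate the growth of $\rho_{\nu}(\pi_{b,s,a}\overline{\psi})$. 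Applying the continuous linear functional $f$ commutes with the Bochner integral, giving
\[
(f,\FBcS^t_{\overline{\psi}}\Phi)=\int_{\FBR^{\times}}\int_{\FBR}\int_{\FBR^2}\Phi(b,s,a)\,(f,\pi_{b,s,a}\overline{\psi})\,\frac{\D b\D s\D a}{|a|^3},
\]
which, after reading off the integrand as the function $S_\psi f$ and invoking the identification \eqref{FBdualitysynthesisspace}, is precisely the right-hand side of \eqref{eq:identificationdistributionfunction}. The main technical obstacle is the fourth step: one must check that the seminorm estimates for the orbit map are uniform enough to guarantee that the integrand of the synthesis is Bochner integrable in $\mathcal{S}_0(\FBR^2)$, and in particular that the negative powers of $|a|$ produced by repeated $\partial_a$-differentiations are absorbed by the weight $|a|^{-m}$ factored into the definition of $\mathcal{S}(\mathbb{S})$; this is what forces the eight indices used in \eqref{FBnorma-hl2} rather than a smaller set.
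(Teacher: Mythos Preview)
Your proposal is correct and takes a genuinely different route from the paper on both halves of the theorem. For the polynomial-growth estimate, the paper exploits the factorization $\pi_{b,s,a}=\pi_{b,0,1}\pi_{0,s,1}\pi_{0,0,a}$ and bounds $\rho_\nu$ of each factor separately via Peetre's inequality and the spectral norms $\|N_s\|$, $\|A_a\|$, then multiplies the three one-parameter bounds; you instead estimate $\rho_\nu(\partial_a^{\gamma}\partial_s^{\beta}\partial_b^{\alpha}\pi_{b,s,a}\psi)$ directly for the full orbit map. For the identification \eqref{eq:identificationdistributionfunction}, the paper invokes the Schwartz structural theorem, writing $f=\partial^{\alpha}g+p$ with $g$ continuous of slow growth and $p$ a polynomial, so that passing $f$ through the $\D\mu$-integral reduces to ordinary Fubini; your Bochner-integral argument in $\mathcal{S}_0(\FBR^2)$ bypasses the structural theorem entirely, since once the $\mathcal{S}_0$-valued integrand is absolutely integrable in every seminorm, any $f\in\mathcal{S}_0'$ automatically commutes with the integral. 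Your route is cleaner and transfers unchanged to other Fr\'echet test-function settings, while the paper's is more elementary in that it avoids vector-valued integration theory. One minor correction: the $s$- and $a$-derivatives do not produce a single $\pi_{b,s,a}\widetilde\psi$ but rather finite sums $\sum_j P_j(s,a,|a|^{-1})\,\pi_{b,s,a}\widetilde\psi_j$ with polynomial prefactors; this does not affect your argument since the $P_j$ are absorbed into the polynomial weight on the right of your seminorm bound.
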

\begin{proof}
Consider $f\in\FBcS'_0(\FBR^2)$ and $\psi\in\FBcS_0(\FBR^2)$. Since
the derivatives of $ \pi_{b,s,a}\psi $ are still in $\FBcS_0(\FBR^2)$ it immediately follows that the function given by \eqref{eq:functiondesingularization} is smooth.
We prove that $S_{\psi}f(b,s,a)$ is a function of at most polynomial growth on $\mathbb{S}$ dividing the proof in three steps.

\par

We start by considering $S_\psi f(b,0,1)$ for every $b\in\FBR^2$. Since $f\in\FBcS'_0(\FBR^2)$, it follows that there exists $\nu\in\FBN$ such that
\begin{align*}
|S_{\psi}f(b,0,1)|=|( f,\pi_{b,0,1}\psi)|&\lesssim \sup_{x\in\FBR^2,\, |m|\leq\nu} \langle x\rangle^\nu|\partial^m(\pi_{b,0,1}\psi)(x)|\\
&=\sup_{x\in\FBR^2,\, |m|\leq\nu} \langle x\rangle^\nu|\partial^m\psi(x-b)|.
\end{align*}
Then, by Peetre's inequality we have that
\begin{align*}
|S_{\psi}f(b,0,1)|\lesssim \sup_{x\in\FBR^2,\, |m|\leq\nu} \langle x\rangle^\nu \langle x-b\rangle^{-\nu}\lesssim  \langle b\rangle^{\nu}
\end{align*}
and $S_{\psi}f(b,0,1)$ is of at most  polynomial growth in $b\in\FBR^2$.

\par

We now consider $S_{\psi}f(0,s,1)$ for every $s\in\FBR$. Since $f\in\FBcS'_0(\FBR^2)$, it follows that there exists $\nu\in\FBN$ such that
\begin{align*}
&|S_{\psi}f(0,s,1)|=|(f,\pi_{0,s,1}\psi)|\lesssim \sup_{x\in\FBR^2,\, |m|\leq\nu} \langle x\rangle^\nu|\partial^m(\pi_{0,s,1}\psi)(x)|\\
&=\sup_{x\in\FBR^2,\, |m|\leq\nu} \langle x\rangle^\nu|\sum_{i=0}^{p_m}c_is^i(\pi_{0,s,1}\partial^m\psi)(x)|\leq \sup_{x\in\FBR^2,\, |m|\leq\nu} \langle x\rangle^\nu\sum_{i=0}^{p_m}|c_i||s|^i||\partial^m\psi(N_s^{-1}x)|\\
&\lesssim \sup_{x\in\FBR^2,\, |m|\leq\nu} \langle x\rangle^\nu\sum_{i=0}^{p_m}|c_i||s|^i(1+|N_s^{-1}x|^2)^{-\frac{\nu}{2}}.
\end{align*}

We recall that, for any $x\in\FBR^2$ and $M\in GL(2,\FBR)$
\[
|M^{-1}x|\geq\|M\|^{-1}|x|,
\]
where $\|M\|$ denotes the spectral norm of the matrix $M$. Thus, since $\|N_s\|=(1+s^2/2+(s^2+s^2/2)^{1/2})^{1/2}$ for all $s\in\FBR$, we have that
\begin{align*}
|S_{\psi}f(0,s,1)|
%\sup_{x\in\FBR^2,\, |m|\leq\nu} \langle x\rangle^\nu\sum_{i=0}^{p_m}|c_i||s|^i(1+(\|N_s\|^{-1}|x|)^2)^{-\frac{\nu}{2}}\\
&\lesssim \sup_{x\in\FBR^2,\, |m|\leq\nu} \langle x\rangle^\nu\sum_{i=0}^{p_m}|c_i||s|^i\langle\|N_s\|^{-1}x\rangle^{-\nu}\\
&\lesssim \sup_{x\in\FBR^2,\, |m|\leq\nu} \langle x\rangle^\nu\sum_{i=0}^{p_m}|c_i||s|^i\|N_s\|^{\nu}\langle x\rangle^{-\nu}=\sup_{|m|\leq\nu}\sum_{i=0}^{p_m}|c_i||s|^i\|N_s\|^{\nu},
\end{align*}
which proves that the function $S_{\psi}f(0,s,1)$ is of at most polynomial growth in $s\in\FBR$.

\par

Finally, we consider $S_{\psi}f(0,0,a)$ for every $a\in\FBR^\times$. Since $f\in\FBcS'_0(\FBR^2)$, it follows that there exists $\nu\in\FBN$ such that
\begin{align*}
|S_{\psi}f(0,0,a)|&=|( f,\pi_{0,0,a}\psi)|\lesssim \sup_{x\in\FBR^2,\, |m|\leq\nu} \langle x\rangle^\nu|\partial^m(\pi_{0,0,a}\psi)(x)|\\
&=\sup_{x\in\FBR^2,\, |m|\leq\nu} \langle x\rangle^\nu |a|^{-m_1-\frac{m_2}{2}-\frac{3}{4}}|\pi_{0,0,a}\partial^m\psi(A_a^{-1}x)|\\
%&\lesssim  \sup_{x\in\FBR^2,\, |m|\leq\nu} \langle x\rangle^\nu |a|^{-m_1-\frac{m_2}{2}-\frac{3}{4}}(1+|A_a^{-1}x|^2)^{-\frac{\nu}{2}}\\
&\lesssim  \sup_{x\in\FBR^2,\, |m|\leq\nu} \langle x\rangle^\nu |a|^{-m_1-\frac{m_2}{2}-\frac{3}{4}}\langle\|A_a\|^{-1}x\rangle^{-\nu},
\end{align*}
where $(m_1,m_2)\in\FBN^2$ and $m_1+m_2=m$. Since $\|A_a\|=|a|^{\frac{1}{2}}$ for $|a|<1$, then
\begin{align*}
&|S_{\psi}f(0,0,a)|\lesssim  \sup_{x\in\FBR^2,\, |m|\leq\nu} \langle x\rangle^\nu |a|^{-m_1-\frac{m_2}{2}-\frac{3}{4}} \langle x\rangle^{-\nu}\lesssim a^{-p},
\end{align*}
for some $p\in\FBN$. If $|a|\geq1$, then $\|A_a\|=|a|$ and we have
\begin{align*}
&|S_{\psi}f(0,0,a)|
%\sup_{x\in\FBR^2,\, |m|\leq\nu} \langle x\rangle^\nu a^{-m_1-\frac{m_2}{2}-\frac{3}{4}}(1+(\|A_a\|^{-1}|x|)^2)^{-\frac{\nu}{2}}\\
\lesssim  \sup_{x\in\FBR^2,\, |m|\leq\nu} \langle x\rangle^\nu |a|^{-m_1-\frac{m_2}{2}-\frac{3}{4}+\nu}\langle x\rangle^{-\nu}\lesssim a^{\nu},
\end{align*}
which proves that the function $S_{\psi}f(0,0,a)$ is of at most polynomial growth in $a$ on $\FBR^\times$.

\par

Therefore, since
\[
S_{\psi}f(b,s,a)=(f, \pi_{b,0,1}\pi_{0,s,1}\pi_{0,0,a}\psi),
\]
we conclude that there exist $C,\nu_1,\nu_2,\nu_3>0$ such that
\begin{equation*}
|S_{\psi}f(b,s,a)|\leq C \langle b \rangle^{\nu_1} \langle s \rangle^{\nu_2} \left(a^{\nu_3}+\frac
{1}{a^{\nu_3}}\right),
\end{equation*}
for every $(b,s,a)\in \mathbb{S}$, and we finally identify $S_{\psi}f$ as an element in $\FBcS'(\mathbb{S})$ by formula~\eqref{FBdualitysynthesisspace}.

\par

To prove~\eqref{eq:identificationdistributionfunction} we use the fact that
the space of Lizorkin distributions $\FBcS'_0(\FBR^2)$ is canonically isomorphic
to the quotient of $\FBcS'(\FBR^2)$ by the space of polynomials. By
the Schwartz structural theorem \cite[Theor\'em VI]{FBLS1951},
we can write $f=\partial^{\alpha}g+p$, where $g$ is a continuous slowly growing function on $\FBR^2$, $\alpha\in\FBN^2$ and $p$ is a polynomial. Then it can be easily shown that
$$%\begin{align*}
(\partial^{\alpha}g,\mathcal{S}_{\overline{\psi}}^t\Phi)
%&=(-1)^{|\alpha|}\int_{\FBR^2} g(x) \partial^{\alpha}\mathcal{S}_{\overline{\psi}}^t\Phi(x)\D x\\
%&=(-1)^{|\alpha|} \int_{\FBR^2} g(x) \int_{\mathbb{S}} \Phi(b,s,a)\,\, (\partial^{\alpha}\pi_{b,s,a}\psi)(x)
%\,\,{\rm d}\mu(b,s,a) \D x\\
%&=\int_{\mathbb{S}} \Phi(b,s,a)\,\,(-1)^{|\alpha|}\int_{\FBR^2} g(x)(\partial^{\alpha}\pi_{b,s,a}\psi)(x)\D x {\rm d}\mu(b,s,a)\\
%&=\int_{\mathbb{S}} \Phi(b,s,a)\,\,(-1)^{|\alpha|}\langle g,({S}_{b,s,a}\psi)^{(\alpha)}\rangle {\rm d}\mu(b,s,a)\\
=\int_{\mathbb{S}} \Phi(b,s,a)\,\,(\partial^{\alpha}g,\pi_{b,s,a}\psi) {\rm d}\mu(b,s,a),  \;\;\; \Phi\in\FBcS(\mathbb{S}),
$$%\end{align*}
and analogously,
$$%\begin{align*}
( p,\mathcal{S}_{\overline{\psi}}^t\Phi)= \int_{\mathbb{S}} \Phi(b,s,a)\,\,( p,\pi_{b,s,a}\psi) {\rm d}\mu(b,s,a), \;\;\; \Phi\in\FBcS(\mathbb{S}).
$$%\end{align*}
Then, for $f \in \FBcS'_0(\FBR^2)$  represented by $f=\partial^{\alpha}g+p$ we obtain
\begin{align*}
( f,\mathcal{S}_{\overline{\psi}}^t\Phi) &=( \partial^{\alpha}g+p,\mathcal{S}_{\overline{\psi}}^t\Phi)
=\int_{\mathbb{S}} \Phi(b,s,a)\,\,( \partial^{\alpha}g+p,\pi_{b,s,a}\psi) {\rm d}\mu(b,s,a)\\
&=\int_{\mathbb{S}} \Phi(b,s,a)\,\,( f,\pi_{b,s,a}\psi) {\rm d}\mu(b,s,a)\\
&=\int_{\mathbb{S}} S_{\psi}f(b,s,a)\,\, \Phi(b,s,a) {\rm d}\mu(b,s,a),
\end{align*}
which concludes the proof.
\end{proof}

We finish the paper by showing that the shearlet transform may represent the action of a Lizorkin distribution on any test function in the form
of an absolutely convergent integral over $\FBR^2\times\FBR\times\FBR^\times$. This is sometimes called {\em desingularization}, we refer to \cite{FBkpsv2014}  for details,
see also \cite[Theorem 24.1.4, Chapter I]{FBhol1995} for analogous statement related to the distributional wavelet transform.

\begin{proposition}\label{cor:desingularization}
Let $\psi,\,\phi\in \FBcS_0(\FBR^2)$ be such that \eqref{FBeqn:admvectequivalent2} holds true and $f\in \FBcS_0'(\FBR^2)$. Then, for every $\varphi\in\FBcS_0(\FBR^2)$
\begin{equation}\label{eq:desingularizationtheorem}
( f,\varphi)= \frac{1}{C_{\psi,\phi}} \int_{\FBR^\times}\int_{\FBR}\int_{\FBR^2} S_{\psi}f(b,s,a)\,\,\FBcS_{\overline{\phi}}\varphi(b,s,a)\frac{\D b \D s  \D a}{|a|^3}.
\end{equation}
\end{proposition}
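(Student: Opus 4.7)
The plan is to chain together three results already established in the paper: the distributional reconstruction formula (Proposition~\ref{prop:reconstructionformuladistributions}), the definitions of $\mathscr S_{\psi}$ and $\mathscr S^t_{\phi}$ by transposition (Definitions~\ref{FBdefn:shearlettransformdistributions} and \ref{FBdefn:synthesisoperatordistributions}), and the identification of $\mathscr S_{\psi}f$ with the scalar-valued function $S_{\psi}f$ given by Theorem~\ref{FBteo:desingularization}.

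Concretely, starting from the reconstruction formula for Lizorkin distributions,
\begin{equation*}
(f,\varphi) \;=\; \frac{1}{C_{\psi,\phi}}\bigl((\mathscr S_{\phi}^t\circ\mathscr S_{\psi})f,\varphi\bigr),\qquad \varphi\in\FBcS_0(\FBR^2),
\end{equation*}
I would first unfold the outer operator using Definition~\ref{FBdefn:synthesisoperatordistributions} to obtain
\begin{equation*}
\bigl((\mathscr S_{\phi}^t\circ\mathscr S_{\psi})f,\varphi\bigr)
\;=\; \bigl(\mathscr S_{\psi}f,\,\FBcS_{\overline{\phi}}\varphi\bigr)_{\FBcS'(\mathbb{S}),\FBcS(\mathbb{S})},
\end{equation*}
which is legitimate because Theorem~\ref{thm:continuityshearlettransform} ensures $\FBcS_{\overline{\phi}}\varphi\in\FBcS(\mathbb{S})$. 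Then I would invoke Theorem~\ref{FBteo:desingularization}, whose statement~\eqref{eq:identificationdistributionfunction} identifies the distribution $\mathscr S_{\psi}f$ with the polynomially bounded smooth function $S_{\psi}f$ acting via the Haar-measure pairing~\eqref{FBdualitysynthesisspace}, yielding
\begin{equation*}
\bigl(\mathscr S_{\psi}f,\,\FBcS_{\overline{\phi}}\varphi\bigr)
\;=\; \int_{\FBR^\times}\!\int_{\FBR}\!\int_{\FBR^2} S_{\psi}f(b,s,a)\,\FBcS_{\overline{\phi}}\varphi(b,s,a)\,\frac{\D b\,\D s\,\D a}{|a|^3},
\end{equation*}
and combining the two displays proves \eqref{eq:desingularizationtheorem}.

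The only point requiring a genuine argument is the claim that this is an \emph{absolutely} convergent integral, which is what the word \emph{desingularization} refers to. For this, note that by Theorem~\ref{FBteo:desingularization} there exist exponents $\nu_1,\nu_2,\nu_3\in\FBN$ and a constant $C>0$ such that $|S_{\psi}f(b,s,a)|\leq C\langle b\rangle^{\nu_1}\langle s\rangle^{\nu_2}(|a|^{\nu_3}+|a|^{-\nu_3})$, while by Theorem~\ref{thm:continuityshearlettransform} the function $\FBcS_{\overline{\phi}}\varphi$ belongs to $\FBcS(\mathbb{S})$ and hence decays faster than any polynomial in $b$ and $s$ and faster than any power of $|a|$ and $|a|^{-1}$. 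Thus the product $S_{\psi}f\cdot\FBcS_{\overline{\phi}}\varphi$ is dominated by a Schwartz-type function on $\mathbb{S}$, and a straightforward application of Fubini's theorem shows it is integrable against $|a|^{-3}\D b\,\D s\,\D a$; this legitimizes all the manipulations above.

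The main (and really only) obstacle is thus the verification that Theorem~\ref{FBteo:desingularization} indeed applies, i.e.\ that the polynomial bounds on $S_{\psi}f$ are compatible with the factor $|a|^{-3}$ in the Haar measure after multiplication by an element of $\FBcS(\mathbb{S})$; once this is observed, the result is essentially a transcription of Proposition~\ref{prop:reconstructionformuladistributions} via the two adjoint relations. No new estimates, and no further auxiliary lemmas, are required.
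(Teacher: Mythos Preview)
Your proposal is correct and follows essentially the same route as the paper's proof: reconstruction formula (Proposition~\ref{prop:reconstructionformuladistributions}), then the transposition relation for $\mathscr S^t_{\phi}$, then the identification of $\mathscr S_{\psi}f$ with $S_{\psi}f$ via Theorem~\ref{FBteo:desingularization} and the pairing~\eqref{FBdualitysynthesisspace}. Your additional paragraph explicitly verifying absolute convergence is a welcome clarification that the paper leaves implicit in its appeal to~\eqref{FBdualitysynthesisspace}.
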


\begin{proof}
Let $\psi,\, \phi\in \FBcS_0(\FBR^2)$ satisfy \eqref{FBeqn:admvectequivalent2} and let $f\in \FBcS_0'(\FBR^2)$.
By Proposition~\ref{prop:reconstructionformuladistributions}, formula~\eqref{eq:identificationdistributionfunction}, and the identification~\eqref{FBdualitysynthesisspace}, it follows that
\begin{align*}
( f,\varphi)=\frac{1}{C_{\psi,\phi}} ((\mathscr S_{\phi}^t\circ\mathscr S_{\psi})f,\varphi)
= \frac{1}{C_{\psi,\phi}} (\mathscr S_{\psi}f,\FBcS_{\overline{\phi}}\varphi)
= \frac{1}{C_{\psi,\phi}} (S_{\psi}f, \FBcS_{\overline{\phi}}\varphi),
\;\;\; \varphi\in\FBcS_0(\FBR^2),
\end{align*}
where $S_{\psi}f$ is the function on $\mathbb{S}$ defined by \eqref{eq:functiondesingularization} and we conclude that formula~\eqref{eq:desingularizationtheorem} holds true.
\end{proof}

%\begin{acknowledgement}
%F. Bartolucci is part of the Computational Harmonic Analysis \& Machine Learning unit of the Machine Learning Genoa Center (MalGa). S. Pilipovi\'{c} and N. Teofanov were supported by the
%Ministry of Education, Science and Technological Development of the
%Republic of Serbia through Project 174024.
%\end{acknowledgement}

%\input{referenc}
\end{document}